\newtheorem{prethm}{{\bf  Theorem}}
\newenvironment{thm}{\begin{prethm}{\hspace{-0.5
               em}{\bf .}}}{\end{prethm}}
\newtheorem{prepro}{{\bf  Theorem}}
\newtheorem{precor}{{\bf  Corollary}}
\newtheorem{prepos}{{\bf  Preposition}}
\newtheorem{preconj}{{\bf  Conjecture}}
\newtheorem{preremark}{{\bf  Remark}}
               \newtheorem{preexample}{{\bf  Example}}
\newenvironment{example}{\begin{preexample}{\hspace{-0.5
               em}{\bf .}}}{\end{preexample}}
                      \newtheorem{preclaim}{{\bf Claim}}
\newtheorem{prelem}{{\bf  Lemma}}
\newenvironment{lem}{\begin{prelem}{\hspace{-0.5
               em}{\bf .}}}{\end{prelem}}
\newtheorem{preproof}{{\bf  Proof.}}
\newenvironment{proof}[1]{\begin{preproof}{\rm
               #1}\hfill{$\Box$}}{\end{preproof}}
\title{\large \bf The $f$-Chromatic Index of Claw-free Graphs Whose $f$-Core\\ is $2$-regular
\thanks
{{\it Key Words}: $f$-coloring, $f$-Core, $f$-Class $1$.}
\thanks {2010{ \it Mathematics Subject Classification}: 05C15,
05C38.
 }}
\author{{\normalsize
{\sc S. Akbari${}^{\mathsf{b}, \mathsf{c}}$},\,
 {\sc M. Chavooshi${}^{\mathsf{c}}$},\,
 {\sc M. Ghanbari${}^{\mathsf{c}}$},\,
  {\sc R. Manaviyat${}^{\mathsf{a}}$}}\vspace{3mm}
\\{\footnotesize{${}^{\mathsf{a}}$\it
Department of   Mathematics, Payame Noor University, Tehran, Iran
}} {\footnotesize{}}\\{\footnotesize{${}^{\mathsf{b}}$\it
Department of
Mathematical Sciences, Sharif University of Technology, Tehran,
Iran
}}
{\footnotesize{}}\\{\footnotesize{${}^{\mathsf{c}}$\it
School of
Mathematics, Institute for Research in Fundamental Sciences (IPM),}}{\footnotesize{}}\\{\footnotesize{${}^{\mathsf{}}$\it
P.O. Box: 19395-5746,
 Tehran, Iran.}}
\thanks{{\it E-mail addresses}:  $\mathsf{s\_akbari@sharif.edu}$,
$\mathsf{chavooshi.m@gmail.com}$,
$\mathsf{marghanbari@gmail.com}$,
$\mathsf{ra\_manaviyat@yahoo.com}$. } }
\date{}
\begin{document}

\maketitle

\begin{abstract}
{\small
Let $G$ be a graph and  $f:V(G)\rightarrow \mathbb{N}$ be a function.
 An $f$-coloring of a graph $G$ is an edge coloring such that each color appears at each vertex $v\in V(G)$ at
most $f (v)$ times. The minimum number of colors needed
to $f$-color $G$ is called the $f$-chromatic index of $G$ and
is denoted by $\chi'_{f}(G)$. It was shown that for every graph $G$, $\Delta_{f}(G)\le \chi'_{f}(G)\le \Delta_{f}(G)+1$, where $\Delta_{f}(G)=\max_{v\in V(G)} \lceil \frac{d_G(v)}{f(v)} \rceil$. A graph $G$ is  said to be $f$-Class $1$ if $\chi'_{f}(G)=\Delta_{f}(G)$, and $f$-Class $2$, otherwise. Also, $G_{\Delta_f}$ is the induced subgraph of $G$ on $\{v\in V(G):\,\frac{d_G(v)}{f(v)}=\Delta_{f}(G)\}$.
In this paper, we show that if $G$ is a connected graph with $\Delta(G_{\Delta_f})\leq 2$ and $G$ has an edge cut of
 size at most $\Delta_f(G) -2 $
 which is a matching or  a star, then $G$ is $f$-Class $1$.
Also, we prove that if  $G$ is a connected graph and every connected component of $G_{\Delta_f}$ is
a unicyclic graph  or a tree and $G_{\Delta_f}$ is not $2$-regular, then $G$ is $f$-Class $1$.
Moreover, we show that except one  graph, every connected claw-free graph $G$ whose $f$-core is $2$-regular with a vertex $v$ such that  $f(v)\neq 1$ is $f$-Class $1$.  }

\end{abstract}

\section{Introduction}
 All graphs considered in this paper are simple and finite.
Let $G$ be a graph. The number of vertices of $G$ is called the order of $G$ and is denoted by $|G|$. Also, $V(G)$
and $E(G)$ denote the vertex set and the edge set of $G$,
respectively. The degree of a vertex $v$ in $G$ is denoted by $d_G(v)$ and $N_G(v)$  denotes the set of all vertices adjacent to $v$.
For a subgraph $H$ of $G$, $d_H(v)=|N_G(v)\cap V(H)|$.
Also, let $\Delta(G)$ and $\delta(G)$ denote the maximum degree
and the minimum degree of $G$, respectively.
A {\it star graph} is a graph containing a vertex  adjacent to all other vertices and with no extra edges.
 A {\it matching} in a graph is a set of pairwise non-adjacent
edges.  An {\it edge cut} is
a set of edges whose removal produces a subgraph with more connected
components than the original graph.  Moreover, a graph is {\it $k$-edge connected} if the minimum number of edges whose removal
would disconnect the graph  is at least $k$.  We mean $G\setminus H$, the
induced subgraph on $V(G)\setminus V(H)$. For two subsets $S$ and $T$ of $V(G)$, where $S\cap T =\emptyset$,
{\it $e_G(S, T)$} denotes the number of edges with one end in $S$ and
other end in $T$.
For a subset $X\subseteq V(G)$, we denote the induced subgraph of $G$
on $X$ by $\langle X\rangle$. A graph $G$ is called a {\it unicyclic}
graph if it is connected and contains exactly one cycle.

 A {\it  $k$-edge coloring} of a graph
$G$ is a function $f: E(G)\longrightarrow L$, where $|L| = k$ and
$f(e_1)\neq f(e_2)$, for every two adjacent edges $e_1,e_2$ of $G$.
The
minimum number of colors needed to color the edges of
$G$ properly is called the {\it chromatic index} of $G$ and is denoted by $\chi'(G)$.
Vizing \cite{9} proved that $\Delta(G) \leq \chi'(G) \leq \Delta(G)+1$,  for any
graph $G$. A graph $G$ is said to be
{\it Class $1$} if $\chi'(G) = \Delta(G)$ and {\it Class $2$} if
$\chi'(G) = \Delta(G) + 1$. A graph $G$ is called  {\it critical} if $G$ is connected, Class $2$ and $\chi'(G\setminus e) < \chi'(G)$, for
every edge $e \in E(G)$. Also, $G_\Delta$ is the induced subgraph on all vertices of degree $\Delta(G)$.

For  a function $f$ which assigns a positive integer $f(v)$ to each
vertex $v \in V(G)$, an {\it $f$-coloring} of $G$ is an edge coloring of $G$ such that each vertex $v$ has
at most $f (v)$ edges colored with the same color. The minimum number of colors needed
to $f$-color $G$ is called the {\it $f$-chromatic index} of $G$, and denoted by {\it $\chi'_{f}(G)$}.
For a graph $G$, if $f (v) = 1$ for all $v \in V(G)$, then the $f$-coloring of $G$ is reduced to the proper edge coloring of $G$. Let $\Delta_{f}(G)=\max_{v\in V(G)} \lceil \frac{d_G(v)}{f(v)} \rceil$. A graph $G$ is said to be {\it $f$-Class $1$} if $\chi'_{f}(G)=\Delta_{f}(G)$ and  {\it $f$-Class $2$}, otherwise. Also, we
say that $G$ has a $\Delta_f(G)$-coloring if $G$ is $f$-Class $1$. A vertex $v$ is called an {\it $f$-maximum vertex} if $d_G(v) =
f(v)\Delta_{f}(G)$.  A graph $G$ is called  {\it $f$-critical} if $G$ is connected, $f$-Class $2$ and $\chi'_f(G\setminus e) < \chi'_f(G)$, for
every edge $e \in E(G)$.
The {\it $f$-core} of a graph $G$ is the induced subgraph of
$G$ on the  $f$-maximum vertices and denoted by $G_{\Delta_f}$. The following example introduces an $f$-Class $1$ graph.

\begin{example}
Let $G$ be a graph shown in the following  figure such that $f(v_1)=f(v_2)=2$ and $f(v_i)=1$, for $i=3,\ldots,7$.
It is easy to see that  $\Delta_f(G)=2$, $G_{\Delta_f}=K_3$ and $G$ is $f$-Class $1$.
\end{example}

\vspace{1.5cm}
 \includegraphics{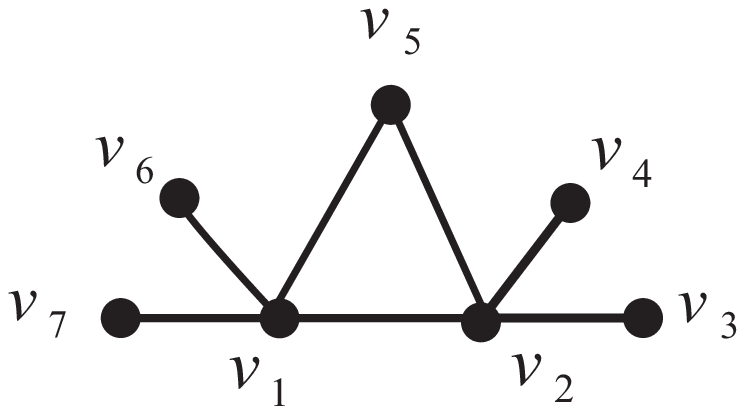} \vspace{1cm} $$\textmd{Figure\,\,$1$: An $f$-Class $1$ graph}$$

%

In {\rm \cite{3}}, Hakimi and Kariv obtained the following three results.

\begin{thm}\label{vizing}
{Let $G$ be a graph. Then $$\Delta_{f}(G) \leq \chi'_{f}(G) \leq {\rm  max}_{v\in V(G)} \lceil \frac{d_G(v)+1}{f(v)} \rceil \leq \Delta_{f}(G)+1.$$}\end{thm}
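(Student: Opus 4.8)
The plan is to establish the chain of inequalities separately, treating the two outer inequalities by elementary arguments and devoting the bulk of the work to the middle inequality, which is the genuine content: a Vizing-type theorem for $f$-colorings.

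First I would prove the leftmost inequality $\Delta_f(G)\le\chi'_f(G)$ by a counting argument at a single vertex. Choose $v$ with $\lceil d_G(v)/f(v)\rceil=\Delta_f(G)$. In any $f$-coloring using $t$ colors, each color is used at most $f(v)$ times at $v$, so the $d_G(v)$ edges incident with $v$ satisfy $d_G(v)\le t\,f(v)$; since $t$ is an integer this forces $t\ge\lceil d_G(v)/f(v)\rceil=\Delta_f(G)$. The rightmost inequality is pure arithmetic: for every vertex $v$ the subadditivity of the ceiling gives $\lceil (d_G(v)+1)/f(v)\rceil\le\lceil d_G(v)/f(v)\rceil+\lceil 1/f(v)\rceil=\lceil d_G(v)/f(v)\rceil+1\le\Delta_f(G)+1$, and taking the maximum over $v$ finishes it.

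For the middle inequality set $k=\max_{v\in V(G)}\lceil (d_G(v)+1)/f(v)\rceil$; the defining property I will exploit is that $k\,f(v)\ge d_G(v)+1$ for every $v$, i.e. at each vertex there is always spare capacity. I would argue by induction on $|E(G)|$. Delete an edge $e=xy$; since deleting an edge does not increase $\max_v\lceil(d_G(v)+1)/f(v)\rceil$, the induction hypothesis yields a $k$-coloring $\phi$ of $G-e$. Call a color \emph{missing} at a vertex $w$ if it is used on fewer than $f(w)$ edges at $w$. The capacity bound guarantees that every vertex, and in particular each of $x$ and $y$, misses at least one color (the endpoints even have two free slots). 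If some color is missing at both $x$ and $y$, assign it to $e$ and we are done. Otherwise I would run Vizing's fan construction at $x$: build a maximal sequence of neighbours $y=y_0,y_1,y_2,\dots$ with $\phi(xy_{i})$ missing at $y_{i-1}$, and then either rotate the colors along the fan to expose a color missing simultaneously at $x$ and at the current fan tip, or, if the fan closes up, pass to a Kempe-type recoloring.

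The main obstacle, and the only place where the argument genuinely departs from the ordinary edge-coloring proof, is the Kempe step. When $f\equiv 1$ the subgraph spanned by two colors $\alpha,\beta$ is a disjoint union of paths and cycles, so swapping $\alpha$ and $\beta$ on the component through $x$ is unambiguous; here a vertex may carry up to $f(w)$ edges of each color, so that subgraph can have large degree and is no longer a union of simple paths. I would resolve this by working with an \emph{alternating trail} (an edge-simple walk whose edges alternate between $\alpha$ and $\beta$) starting at $x$: because every interior vertex has equal room to enter on one color and leave on the other, such a trail can be continued until it reaches a vertex that misses the color needed to close the fan, and interchanging $\alpha$ and $\beta$ along this trail respects all capacity constraints and creates the required common missing color without disturbing the rest of the coloring. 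As a sanity check and a quick route to the weaker bound $\chi'_f(G)\le\Delta_f(G)+1$, one can instead split each $v$ into $f(v)$ copies and distribute its incident edges as evenly as possible, obtaining a simple graph of maximum degree $\Delta_f(G)$ whose proper edge colorings, guaranteed by the classical Vizing bound, pull back to $f$-colorings of $G$; the sharper constant $k$, however, requires the fan-and-trail analysis above.
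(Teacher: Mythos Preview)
The paper does not prove this theorem; it is quoted verbatim from Hakimi and Kariv \cite{3} as one of three background results, with no argument supplied. So there is no ``paper's own proof'' to compare against.

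Your outline is essentially the original Hakimi--Kariv argument. The two outer inequalities are handled correctly. For the middle one, the fan-plus-recoloring scheme is the right approach, and your identification of the only nontrivial point---that the two-color subgraph is no longer a union of paths and cycles, so one must recolor along an alternating \emph{trail} rather than a Kempe chain---is exactly the adaptation Hakimi and Kariv make. The sketch is a little loose at that step: you should say precisely where the trail starts (at $x$, with a $\beta$-edge, where $\alpha$ is missing at $x$ and $\beta$ is missing at the relevant fan vertex), why it must terminate (it is edge-simple in a finite graph), why the swap is legal at every vertex (internal vertices keep the same multiset of colors; each endpoint gains one occurrence of a color that was missing there), and finally why the swap does not spoil the fan---this last point splits into the same two cases as in the classical proof, depending on whether or not the trail passes through the earlier fan vertex $y_i$. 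With those routine details filled in, the proof is complete. Your closing remark about the splitting construction is also correct as a quick route to the weaker bound $\Delta_f(G)+1$.
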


\begin{thm}
{Let $G$ be a bipartite graph. Then $G$ is $f$-Class $1$.
}\end{thm}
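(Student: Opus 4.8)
The plan is to prove the upper bound $\chi'_f(G)\le\Delta_f(G)$, since the reverse inequality $\Delta_f(G)\le\chi'_f(G)$ is supplied by Theorem~\ref{vizing}; together these give that $G$ is $f$-Class $1$. Write $k=\Delta_f(G)$ and argue by induction on $k$, the idea being to peel off one color class and recurse on a bipartite graph with strictly smaller $\Delta_f$. The base case $k=1$ is immediate: here $d_G(v)\le f(v)$ for every $v$, so giving a single color to all of $E(G)$ is already an $f$-coloring. In general a color class of an $f$-coloring is precisely a set $M\subseteq E(G)$ meeting each vertex $v$ at most $f(v)$ times, so the whole task is to partition $E(G)$ into $k$ such sets.

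For the inductive step I would look for a subgraph $M$ of $G$ with
$$\max\{0,\,d_G(v)-(k-1)f(v)\}\le d_M(v)\le f(v)\qquad\text{for all }v\in V(G).$$
The upper bound makes $M$ a legitimate color class, while the lower bound guarantees $d_{G-M}(v)\le(k-1)f(v)$ for every $v$, hence $\Delta_f(G-M)\le k-1$. The two bounds are compatible at each vertex because $\lceil d_G(v)/f(v)\rceil\le k$ forces $d_G(v)\le kf(v)$, so $d_G(v)-(k-1)f(v)\le f(v)$. Given such an $M$, I would assign it the $k$-th color and apply the induction hypothesis to the bipartite graph $G-M$, coloring its edges with $k-1$ colors each appearing at most $f(v)$ times at every $v$; the result is the desired $k$-coloring of $G$.

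The main obstacle is establishing that this degree-constrained subgraph $M$ exists, and this is exactly where bipartiteness is used. I would cast it as an integer feasibility problem: find $x\in\{0,1\}^{E(G)}$ with $g(v)\le\sum_{e\ni v}x_e\le f(v)$, where $g(v)=\max\{0,\,d_G(v)-(k-1)f(v)\}$. Since $G$ is bipartite, its vertex-edge incidence matrix is totally unimodular, so the polytope cut out by these degree inequalities together with $0\le x_e\le 1$ is integral and hence contains a lattice point whenever it is nonempty. Nonemptiness is witnessed by the uniform fractional point $x_e=1/k$: one has $\sum_{e\ni v}x_e=d_G(v)/k\le f(v)$ from $d_G(v)\le kf(v)$, and the same inequality rearranges to $d_G(v)-(k-1)f(v)\le d_G(v)/k$, so the lower bound $g(v)$ is met as well. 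Any integral point of this polytope is the required $M$, which closes the induction.

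If one prefers to avoid polyhedral language, the same $M$ can be produced by a Hoffman-type circulation argument on the bipartite orientation of $G$, or the whole theorem can be packaged through a balanced $k$-edge-coloring of $G$: a coloring in which each color meets $v$ either $\lfloor d_G(v)/k\rfloor$ or $\lceil d_G(v)/k\rceil$ times is automatically an $f$-coloring, since $d_G(v)/k\le f(v)$ yields $\lceil d_G(v)/k\rceil\le f(v)$ as $f(v)$ is an integer. In every version the real content is the integrality granted by bipartiteness; the arithmetic checks above are routine once that is in hand.
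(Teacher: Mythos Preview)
Your argument is correct. The induction on $k=\Delta_f(G)$ with the degree-constrained subgraph $M$ satisfying $\max\{0,d_G(v)-(k-1)f(v)\}\le d_M(v)\le f(v)$ works as stated: the arithmetic compatibility at each vertex is right, the fractional point $x_e=1/k$ does lie in the polytope, and total unimodularity of the bipartite incidence matrix delivers an integral point. The balanced-coloring variant you mention at the end is also a clean way to finish.

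However, there is nothing to compare against here: the paper does not give its own proof of this statement. The theorem is quoted as one of three results obtained by Hakimi and Kariv in \cite{3}, and the paper simply cites it as background. So your proposal is not a reconstruction of the paper's proof but an independent justification. For what it is worth, the Hakimi--Kariv approach is more constructive (via alternating paths and recoloring, in the spirit of K\H{o}nig's proof for ordinary edge-coloring), whereas your route through total unimodularity is existential and polyhedral; both are standard ways to exploit bipartiteness, and your version has the virtue of being short.
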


\begin{thm}
{Let $G$ be a graph and $f(v)$ be even, for all $v\in V(G)$. Then $G$ is $f$-Class $1$.
}\end{thm}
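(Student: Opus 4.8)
The plan is to build an $f$-coloring of $G$ that uses only $\Delta_f(G)$ colors; since Theorem~\ref{vizing} already supplies the lower bound $\chi'_f(G)\ge \Delta_f(G)$, producing such a coloring forces $G$ to be $f$-Class $1$. Write $f(v)=2g(v)$ for each $v$ (legitimate since every $f(v)$ is even and positive, so $g(v)\ge 1$ is an integer), and set $k=\Delta_f(G)$, so that $d_G(v)\le kf(v)$ for every vertex $v$. The whole idea is to reduce an even-$f$ problem on $G$ to an $f$-coloring problem on a \emph{bipartite} graph, which is settled by the preceding theorem on bipartite graphs.

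First I would make all degrees even. Let $G^*$ be obtained from $G$ by adding a new vertex $u$ joined to every odd-degree vertex of $G$; the number of odd-degree vertices is even, so $d_{G^*}(u)$ is even, and I set $f(u)=d_{G^*}(u)$. Now every vertex of $G^*$ has even degree, and the crucial parity point is that $d_{G^*}(v)\le d_G(v)+1\le kf(v)+1$, while both $d_{G^*}(v)$ and $kf(v)$ are even; hence in fact $d_{G^*}(v)\le kf(v)$ for all $v\neq u$, and $d_{G^*}(u)=f(u)$. Since every vertex of $G^*$ has even degree, $G^*$ admits an Eulerian orientation, i.e.\ an orientation with $d^+(v)=d^-(v)=\tfrac12 d_{G^*}(v)$ at every vertex.

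From this orientation I would form a bipartite graph $B$ with two copies $v^+,v^-$ of each vertex $v$, placing an edge $v^+w^-$ for every arc $v\to w$ of $G^*$. Then $d_B(v^+)=d^+(v)=\tfrac12 d_{G^*}(v)$ and $d_B(v^-)=d^-(v)=\tfrac12 d_{G^*}(v)$. Setting $f_B(v^+)=f_B(v^-)=g(v)$ gives $\lceil d_B(v^{\pm})/f_B(v^{\pm})\rceil=\lceil \tfrac12 d_{G^*}(v)/g(v)\rceil=\lceil d_{G^*}(v)/f(v)\rceil\le k$, so $\Delta_{f_B}(B)\le k$. Because $B$ is bipartite, the preceding theorem yields an $f_B$-coloring of $B$ with at most $k$ colors. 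I would then transport it back: color each edge $vw$ of $G^*$, arising from an arc $v\to w$, with the color of $v^+w^-$ in $B$. At any vertex $v$ of $G^*$ a fixed color $c$ appears on out-arcs at most $f_B(v^+)=g(v)$ times and on in-arcs at most $f_B(v^-)=g(v)$ times, hence on at most $2g(v)=f(v)$ incident edges in total. This is an $f$-coloring of $G^*$ with at most $k$ colors, and restricting to $E(G)\subseteq E(G^*)$ gives the desired $\Delta_f(G)$-coloring of $G$.

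Granting the bipartite theorem, the argument is self-contained, and I expect the only delicate step to be the parity bookkeeping in the reduction: forcing all degrees to be even must not raise any ratio $\lceil d/f\rceil$ above $k$. It is precisely the hypothesis that every $f(v)$ is even that makes this work, since it renders $kf(v)$ even and thereby lets the extra unit of degree contributed by $u$ be absorbed without increasing $\lceil d_{G^*}(v)/f(v)\rceil$; the same evenness is what allows the clean split into the two halves $g(v)$ on the in-arcs and out-arcs.
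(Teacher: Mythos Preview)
The paper does not supply its own proof of this statement; it is quoted (along with the bipartite theorem and the generalized Vizing bound) as a result of Hakimi and Kariv~\cite{3} and used as a black box. So there is no in-paper argument to compare against.

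Your argument is correct and is in fact essentially the classical Hakimi--Kariv proof: make all degrees even by attaching an auxiliary vertex, take an Eulerian orientation, pass to the bipartite split, and invoke the bipartite $f$-coloring theorem. The parity bookkeeping you flag is exactly the point where the hypothesis ``$f(v)$ even'' is used, and you handle it correctly. Two cosmetic points: if $G$ already has all degrees even then your auxiliary vertex $u$ is isolated and $f(u)=d_{G^*}(u)=0$ is not a positive integer---just omit $u$ in that case (or set $f(u)=2$); and you should note that $B$ is simple because $G^*$ is, so the bipartite theorem (stated in this paper only for simple graphs) applies.
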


\begin{thm}{\rm\cite{37}}\label{tohi}
{Let $G$ be a graph. If $f(v)\nmid d(v)$ for all $v\in V(G_{\Delta_f})$, then $G$ is $f$-Class $1$.
}\end{thm}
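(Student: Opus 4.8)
The plan is first to restate the hypothesis in a more usable form. For a vertex $v$ in the $f$-core $G_{\Delta_f}$ one has $\lceil d_G(v)/f(v)\rceil=\Delta_f(G)$, so the requirement $f(v)\nmid d_G(v)$ forces $d_G(v)/f(v)$ to be a non-integer strictly below $\Delta_f(G)$, i.e. $d_G(v)<f(v)\Delta_f(G)$; and for every vertex outside the $f$-core the same strict inequality holds automatically, since there $\lceil d_G(v)/f(v)\rceil\le\Delta_f(G)-1$. Hence the hypothesis is equivalent to asserting that $d_G(v)<f(v)\Delta_f(G)$ for \emph{every} $v\in V(G)$, i.e. that $G$ has no $f$-maximum vertex. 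I would record this equivalence at the outset and work throughout with the inequality $d_G(v)<f(v)\Delta_f(G)$.

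Next I would reduce the $f$-coloring problem to an ordinary edge-coloring problem by vertex splitting. Build an auxiliary simple graph $G^{*}$ by replacing each $v\in V(G)$ with $f(v)$ copies $v_1,\dots,v_{f(v)}$ and distributing the $d_G(v)$ edges incident with $v$ among these copies so that each copy receives at most $\lceil d_G(v)/f(v)\rceil$ of them. Any proper edge-coloring of $G^{*}$ yields an $f$-coloring of $G$ with the same number of colors, because a color meets each copy of $v$ at most once and hence meets $v$ at most $f(v)$ times; thus $\chi'_f(G)\le\chi'(G^{*})$. Moreover $\Delta(G^{*})=\Delta_f(G)$, the maximum being attained only at copies of $f$-core vertices. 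Consequently it suffices to exhibit a splitting for which $G^{*}$ is Class $1$, since then $\chi'_f(G)\le\chi'(G^{*})=\Delta(G^{*})=\Delta_f(G)$, while the reverse inequality $\Delta_f(G)\le\chi'_f(G)$ is Theorem~\ref{vizing}.

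The engine for the last step is the slack inequality combined with Fournier's theorem, which states that if the vertices of maximum degree of a graph induce a forest then the graph is Class $1$. Since $d_G(v)<f(v)\Delta_f(G)$, in a balanced splitting every $f$-core vertex acquires at least one copy of degree strictly below $\Delta_f(G)$, while every vertex outside the core splits into copies all of degree below $\Delta_f(G)$. Therefore the only edges of $G^{*}$ that can join two maximum-degree copies come from edges of $G_{\Delta_f}$, and I would exploit the freedom in assigning these core edges to copies so as to make the subgraph of $G^{*}$ induced by its maximum-degree copies acyclic. Fournier's theorem then completes the argument.

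The main obstacle is precisely this routing step. One cannot in general make the maximum-degree copies \emph{independent}: for instance, when $G=K_4$ with $f\equiv 2$ each vertex has slack $f(v)\Delta_f(G)-d_G(v)=1$, so the low-degree copies simply do not have enough capacity to absorb an endpoint of every one of the six core edges, and at least two edges are forced to join two maximum-degree copies. Hence one must settle for the weaker Fournier-type conclusion that these copies induce a forest. Achieving this amounts to orienting the edges of $G_{\Delta_f}$ subject to the capacity constraints imposed by the low-degree copies, and the slack $f(v)\Delta_f(G)-d_G(v)\ge 1$ available at every core vertex is exactly what should guarantee, via a Hall-type degree-constrained flow argument, that an acyclic assignment exists. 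Verifying this capacity bound carefully is the heart of the proof.
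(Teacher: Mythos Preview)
The paper does not actually prove this statement; it is quoted from \cite{37}. Note also that in this paper $G_{\Delta_f}$ is, by definition, the subgraph induced on the \emph{$f$-maximum} vertices, those with $d_G(v)=f(v)\Delta_f(G)$, not the broader set $\{v:\lceil d_G(v)/f(v)\rceil=\Delta_f(G)\}$ that you implicitly work with. Since every $f$-maximum vertex trivially satisfies $f(v)\mid d_G(v)$, the hypothesis here is simply the assertion that $G_{\Delta_f}=\emptyset$, i.e.\ that $d_G(v)<f(v)\Delta_f(G)$ for every $v$---the same conclusion you reach, but immediately.

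Given that reformulation, the result is a one-line consequence of Theorem~\ref{vizing}: from $d_G(v)+1\le f(v)\Delta_f(G)$ one gets $\lceil(d_G(v)+1)/f(v)\rceil\le\Delta_f(G)$ for every $v$, whence $\chi'_f(G)\le\Delta_f(G)$ and $G$ is $f$-Class~$1$. Your route through vertex splitting and Fournier's theorem is therefore a substantial detour, and it is also incomplete: the step you yourself flag as ``the heart of the proof''---routing the core edges so that the maximum-degree copies in $G^{*}$ induce a forest---is left as a hoped-for Hall-type argument with no verification. So as written the proposal has a genuine gap precisely at its only nontrivial point, and that point is in any case unnecessary once Theorem~\ref{vizing} is available.
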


Following result due to Zhang, Wang and  Liu gave a series of sufficient conditions for a
graph $G$ to be  $f$-Class $1$ based on the $f$-core of $G$.

\begin{thm}{\rm\cite{38}}\label{forest}
{Let $G$ be a  graph. If $G_{\Delta_f}$ is a forest, then $G$ is  $f$-Class $1$.}
\end{thm}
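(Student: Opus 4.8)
The plan is to imitate Fournier's classical theorem for ordinary edge colouring (the case $f\equiv 1$, where the vertices of maximum degree inducing a forest forces Class $1$): first pass to an $f$-critical subgraph that still has a forest $f$-core, and then prove that the $f$-core of \emph{any} $f$-critical graph contains a cycle. Since a forest is acyclic, this contradiction will force $G$ to be $f$-Class $1$.

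First I would reduce to the critical case. Suppose, for contradiction, that $G$ is $f$-Class $2$. Among all subgraphs of $G$ that are $f$-Class $2$ and satisfy $\Delta_f(H)=\Delta_f(G)$, choose one, $H$, with the fewest edges. Then $H$ is connected (otherwise a single $f$-Class $2$ component would already lie in this family with fewer edges) and $H$ is $f$-critical, because deleting any edge $e$ either lowers $\Delta_f$ or yields an $f$-Class $1$ graph, so in either case $\chi'_{f}(H\setminus e)\le\Delta_f(G)<\chi'_{f}(H)$. Keeping $\Delta_f(H)=\Delta_f(G)$ is what preserves the hypothesis: a vertex $v$ can be $f$-maximum in $H$ only if $d_H(v)=f(v)\Delta_f(G)$, which together with $d_H(v)\le d_G(v)\le f(v)\Delta_f(G)$ forces $d_H(v)=d_G(v)$ and makes $v$ $f$-maximum in $G$ as well. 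Hence $V(H_{\Delta_f})\subseteq V(G_{\Delta_f})$ and $H_{\Delta_f}$ is a subgraph of $G_{\Delta_f}$, so it is again a forest. Replacing $G$ by $H$, I may assume $G$ is $f$-critical with $G_{\Delta_f}$ a forest; by the contrapositive of Theorem \ref{tohi} an $f$-Class $2$ graph has a nonempty $f$-core, so $G_{\Delta_f}$ is a nonempty forest and therefore has a vertex of degree at most $1$ in $G_{\Delta_f}$.

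The heart of the argument is the claim that in an $f$-critical graph $\delta(G_{\Delta_f})\ge 2$, which contradicts the previous sentence. To prove it, fix an $f$-maximum vertex $v$ with at most one $f$-maximum neighbour. Since $\Delta_f(G)=1$ would let every vertex absorb all its edges in a single colour, an $f$-Class $2$ graph has $\Delta_f(G)\ge 2$; hence $d_G(v)=f(v)\Delta_f(G)\ge 2$, and as at most one neighbour of $v$ is $f$-maximum, $v$ has a non-maximum neighbour $x$. Put $e=vx$. By criticality $G\setminus e$ has a $\Delta_f(G)$-colouring $c$. Counting incidences at $v$, where $d_{G\setminus e}(v)=f(v)\Delta_f(G)-1$ against a total capacity $f(v)\Delta_f(G)$, shows that exactly one colour $\alpha$ is missing at $v$ (it occurs $f(v)-1$ times, every other colour occurring $f(v)$ times), while the slack at the non-maximum $x$ leaves a colour $\beta$ missing at $x$. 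If $\alpha$ were also missing at $x$, colouring $e$ with $\alpha$ would contradict $f$-Class $2$; so $\alpha$ occurs exactly $f(x)$ times at $x$. I would then run a Vizing-type fan at $v$, starting from an $\alpha$-edge at $x$ and alternately following colours missing and present at successive fan vertices, recolouring along it so as either to free $\alpha$ simultaneously at $v$ and $x$ or to expose an $\alpha$--$\beta$ Kempe component whose reversal does so. Each such recolouring contradicts criticality, and tracking which fan vertices are forced to be $f$-maximum shows $v$ must after all have at least two $f$-maximum neighbours.

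The main obstacle is exactly this last step: executing the fan and Kempe-chain recolourings in the $f$-colouring setting, where each colour may legitimately appear up to $f(w)$ times at a vertex $w$. Unlike proper colouring, a colour being merely \emph{present} at a vertex no longer blocks recolouring, so one must carry the full multiplicity of every colour and argue that swapping two colours along the relevant subgraph preserves the bound $f(w)$ everywhere except at the one vertex where a missing slot is meant to be created. Establishing this controlled-swap statement — an $f$-analogue of Vizing's Adjacency Lemma asserting that an $f$-maximum vertex of an $f$-critical graph has at least two $f$-maximum neighbours — is the crux. Once it is in hand, $\delta(G_{\Delta_f})\ge 2$ follows, the forest hypothesis is contradicted, and $G$ must be $f$-Class $1$.
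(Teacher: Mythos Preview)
The paper does not actually prove this statement; it is quoted from Zhang, Wang and Liu as background, so there is no in-paper argument to compare against directly. That said, your outline is the standard route and is correct. The reduction to an $f$-critical subgraph $H$ with $\Delta_f(H)=\Delta_f(G)$, and the verification that $V(H_{\Delta_f})\subseteq V(G_{\Delta_f})$ (so $H_{\Delta_f}$ is still a forest, nonempty by the contrapositive of Theorem~\ref{tohi}), are both sound.

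The step you correctly isolate as the crux --- that an $f$-maximum vertex of an $f$-critical graph must have at least two $f$-maximum neighbours --- is precisely a special case of Theorem~\ref{2} in this paper (due to Liu, Hou and Cai), which in fact gives the stronger bound $2f(v)$. So you do not need to redevelop the $f$-colouring fan/Kempe machinery from scratch: invoking Theorem~\ref{2} immediately yields $\delta(H_{\Delta_f})\ge 2$, contradicting the existence of a leaf or isolated vertex in the nonempty forest $H_{\Delta_f}$, and the argument is complete. Your instinct that the multiplicity bookkeeping in the Vizing fan is where all the work lies is exactly right; the paper simply outsources that work to the cited reference rather than reproving it.
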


In {\rm \cite{critical}}, some properties of $f$-critical graphs are given. In the following, we review one of them.

\begin{thm}\label{2}
{For every vertex  $v$ of an $f$-critical graph $G$,
 $v$ is adjacent to at least $2f (v)$ $f$-maximum vertices and
$G$ contains at least three $f$-maximum vertices.}
\end{thm}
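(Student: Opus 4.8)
The plan is to prove the adjacency statement first and then read off the ``at least three'' conclusion as an easy corollary. Write $k=\Delta_f(G)$. Because $G$ is $f$-critical it is $f$-Class $2$, so $\chi'_f(G)=k+1$ and, for every edge $e$, $G\setminus e$ admits a $k$-$f$-coloring (removing an edge cannot increase $\Delta_f$, and criticality forces $\chi'_f(G\setminus e)\le k$). I would begin by noting that $G_{\Delta_f}\neq\emptyset$: if it were empty, the hypothesis of Theorem \ref{tohi} would hold vacuously and $G$ would be $f$-Class $1$, contradicting $f$-criticality. Granting the adjacency claim, choose any $f$-maximum vertex $w$; it then has at least $2f(w)\ge 2$ $f$-maximum neighbors, and these together with $w$ give at least three distinct $f$-maximum vertices. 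Hence everything reduces to showing that an arbitrary vertex $v$ has at least $2f(v)$ $f$-maximum neighbors.

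Fix such a $v$ and a neighbor $u$, and take a $k$-$f$-coloring $\varphi$ of $G\setminus vu$. For a vertex $w$ call a color $c$ \emph{available at $w$} if fewer than $f(w)$ edges at $w$ receive color $c$, let $A(w)$ be the set of colors available at $w$, and count availability by \emph{slots}: the number of free slots at $w$ is $\sum_c\bigl(f(w)-\varphi(w,c)\bigr)=kf(w)-d_{G\setminus vu}(w)$. The decisive elementary observation is that $A(v)\cap A(u)=\emptyset$: a color available at both $v$ and $u$ could be assigned to $vu$ without exceeding $f$ at either endpoint, extending $\varphi$ to a $k$-$f$-coloring of $G$ and contradicting $\chi'_f(G)=k+1$. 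Since $d_G(w)\le kf(w)$, both $u$ and $v$ have at least one free slot; moreover an $f$-maximum endpoint has exactly one free slot (there $kf(w)-(d_G(w)-1)=1$), whereas a non-maximum endpoint has strictly more. These two facts---disjoint availability, and minimal slack precisely at $f$-maximum vertices---are the levers for the rest.

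The heart of the proof is the $f$-analog of Vizing's Adjacency Lemma, carried out by a fan-and-Kempe-chain argument at $v$. The plan is to build a maximal sequence of distinct neighbors $u=u_1,u_2,\dots,u_t$ of $v$ in which the color on $vu_{i+1}$ is available at $u_i$, and then to recolor by interchanging two colors $\alpha$ and $\beta$ along the component meeting $v$ of the subgraph spanned by the edges of those two colors, thereby freeing a slot at $v$ and sliding the uncolored edge along the fan. As in the classical setting, the failure of $\varphi$ to extend forces the fan vertices to be $f$-maximum. Performing the case analysis according to whether the initial neighbor $u$ is itself $f$-maximum---the dichotomy that produces ``two maximum-degree neighbors'' in an ordinary critical graph---and accounting for the fact that each color may be reused up to $f(v)$ times at $v$, yields the bound of $2f(v)$ $f$-maximum neighbors.

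The step I expect to be the main obstacle is precisely this multiplicity bookkeeping. Because availability is a counting condition rather than a binary one, the Kempe component for $\{\alpha,\beta\}$ need not be a simple path, a single interchange can re-saturate $\beta$ at some $u_i$ on the fan, and ``freeing a slot'' must be tracked slot-by-slot rather than color-by-color. The argument therefore has to be organized around free slots, invoking the maximality of the fan together with the disjointness $A(v)\cap A(u_i)=\emptyset$ at each stage to certify that whenever the extension fails it pins down a \emph{fresh} $f$-maximum neighbor of $v$. The delicate points are guaranteeing that exactly $2f(v)$ distinct neighbors arise with no double counting, and disposing of the boundary cases in which a Kempe component is a cycle or returns to $v$. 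Once these are handled the count follows, and with the reduction of the first paragraph so does the statement about three $f$-maximum vertices.
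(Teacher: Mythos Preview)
The paper does not actually prove this theorem: it is quoted from \cite{critical} (Liu, Hou and Cai) as background, with no argument supplied here. So there is no ``paper's own proof'' to compare your attempt against; the result is treated as a known tool that the authors invoke repeatedly in the later arguments.

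As to your sketch itself: the overall architecture is the right one and matches what one finds in the literature on $f$-critical graphs---deduce nonemptiness of $G_{\Delta_f}$ from Theorem~\ref{tohi}, reduce the ``three $f$-maximum vertices'' claim to the adjacency bound, and prove that bound by an $f$-analogue of Vizing's fan argument based on the disjointness $A(v)\cap A(u)=\emptyset$ of available colors at the endpoints of a deleted edge. Your identification of the slot-counting subtlety (availability is a multiplicity, Kempe components need not be paths, and one must avoid double-counting neighbors) is exactly the point where the $f$-version departs from the classical one. What you have written, however, is a plan rather than a proof: the fan construction is described but not executed, and the crucial step---showing that each failure to extend the coloring forces a \emph{new} $f$-maximum neighbor, until $2f(v)$ of them have been found---is asserted rather than carried out. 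To complete the argument you would need to make the fan inductive step explicit, track the free-slot invariants through each Kempe interchange, and handle the cycle/return cases you flag at the end. If you want a template, the reference \cite{critical} cited in the paper contains the full proof.
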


There are some theorems in proper edge coloring of graphs as follows:

\begin{thm}\label{item1}{\rm\cite{10}}
{ Let $G$ be a connected Class $2$ graph with $\Delta
(G_{\Delta})\leq 2$. Then:

  {\rm $1.$} $G$ is critical;

  {\rm $2.$} $\delta (G_{\Delta})=2$;

  {\rm $3.$} $\delta (G)= \Delta (G)-1$, unless G is an odd cycle.

}
\end{thm}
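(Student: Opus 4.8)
The engine of the whole argument is Vizing's Adjacency Lemma (VAL): in a critical graph $J$ of maximum degree $\Delta(J)$, for every edge $uv$ the vertex $v$ has at least $\Delta(J)-d_J(u)+1$ neighbours of degree $\Delta(J)$. I would first record two consequences valid in any critical graph $J$ with $\Delta(J)\ge 2$. First, (C1) every vertex has at least one neighbour of maximum degree: applying VAL to any incident edge $uw$ gives at least $\Delta(J)-d_J(u)+1\ge 1$ maximum neighbours of $w$. Second, (C2) every vertex $v$ of degree $\Delta(J)$ has at least two neighbours of degree $\Delta(J)$: if $v$ had at most one, then among its $\Delta(J)\ge 2$ neighbours some $u$ has $d_J(u)\le\Delta(J)-1$, and VAL on $uv$ forces $v$ to have at least $\Delta(J)-(\Delta(J)-1)+1=2$ maximum neighbours, a contradiction.

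The plan is to prove parts $2$ and $3$ first, as statements about an arbitrary connected critical graph $J$ satisfying $\Delta(J_\Delta)\le 2$, and then bootstrap part $1$. For part $2$, (C2) gives that every vertex of $J_\Delta$ has at least two neighbours in $J_\Delta$, while the hypothesis gives at most two; hence $J_\Delta$ is $2$-regular and $\delta(J_\Delta)=2$. For part $3$ assume $\Delta=\Delta(J)\ge 3$ (if $\Delta=2$ then a connected Class $2$ graph is an odd cycle, the stated exception). If $J$ were $\Delta$-regular then $J_\Delta=J$ and $\Delta(J_\Delta)=\Delta\ge 3$, contradicting the hypothesis, so $\delta(J)\le\Delta-1$. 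Suppose $\delta(J)\le\Delta-2$ and take $w$ with $d_J(w)=\delta(J)$. By (C1), $w$ has a maximum neighbour $z$, and VAL applied to $wz$ shows that $z$ has at least $\Delta-\delta(J)+1\ge 3$ neighbours of degree $\Delta$, i.e.\ $d_{J_\Delta}(z)\ge 3$, contradicting $\Delta(J_\Delta)\le 2$. Thus $\delta(J)=\Delta-1$.

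For part $1$ I would pass to a critical subgraph (the case $\Delta=2$ being immediate, as $G$ is then an odd cycle, which is critical). Since $G$ is Class $2$ it contains a connected critical subgraph $H$, and from $\chi'(H)=\Delta(G)+1$ together with $\chi'(H)\le\Delta(H)+1$ and $\Delta(H)\le\Delta(G)$ we get $\Delta(H)=\Delta(G)=:\Delta$. For $v\in V(H_\Delta)$ we have $d_H(v)=\Delta\le d_G(v)\le\Delta$, so $N_G(v)=N_H(v)$ and $v$ is also maximum in $G$; consequently $H_\Delta$ is an induced subgraph of $G_\Delta$, whence $\Delta(H_\Delta)\le 2$ and $H$ satisfies the hypothesis. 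Applying the results above to $H$ shows $H_\Delta$ is $2$-regular and, for $\Delta\ge 3$, that $\delta(H)=\Delta-1$; thus every vertex of $H$ has $H$-degree $\Delta$ or $\Delta-1$. A maximum-degree vertex of $H$ already has all its $G$-edges in $H$, and a vertex of $H$-degree $\Delta-1$ whose $G$-degree is also $\Delta-1$ has all its edges in $H$ as well. The only possible edges of $G$ missing from $H$ are therefore incident to vertices that are maximum in $G$ but not in $H$, and each such vertex (having $H$-degree exactly $\Delta-1$) is missing exactly one edge. Using this, the connectivity of $G$, and an exchange at these deficient vertices, I would argue that $H$ contains every edge incident to $V(H)$, so $H$ is a union of components of $G$ and hence $H=G$. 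Once $G$ is known to equal its critical subgraph, parts $2$ and $3$ for $G$ follow at once from the second paragraph.

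The hard part is precisely this last step, namely showing that the critical subgraph exhausts $G$. The mere existence of a critical subgraph does not force it to be spanning or edge-complete, so ruling out a missing edge requires a genuine recolouring (Kempe-chain / fan) argument rather than pure degree bookkeeping; the hypothesis $\Delta(G_\Delta)\le 2$ must be invoked to show that reinstating a missing edge would either create a maximum-degree vertex with three maximum-degree neighbours or else allow a $\Delta$-colouring of $G$, both impossible. Every other step reduces to a direct application of VAL.
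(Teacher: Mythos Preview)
The paper does not prove this theorem; it is quoted from Hilton and Zhao \cite{10} and used as a black box. So there is no proof in the paper to compare against, and I can only assess your argument on its own.

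Your treatment of parts 2 and 3 via Vizing's Adjacency Lemma is correct and standard. For part 1 your plan (take a critical subgraph $H$ with $\Delta(H)=\Delta(G)$, observe $H_\Delta\subseteq G_\Delta$ so $\Delta(H_\Delta)\le 2$, feed $H$ into parts 2 and 3, then show $H=G$) is exactly the right shape. But you stop short of finishing the last step and then assert that it ``requires a genuine recolouring (Kempe-chain / fan) argument rather than pure degree bookkeeping''. That is not so; the bookkeeping you have already set up closes the gap.

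Here is the missing line. Assume $\Delta\ge 3$ and suppose some $v\in V(H)$ is \emph{deficient}, i.e.\ $d_H(v)=\Delta-1<\Delta=d_G(v)$. By your (C1) in $H$, $v$ has a neighbour $u\in V(H_\Delta)$. Since $H_\Delta$ is $2$-regular (part 2 applied to $H$), $u$ has two neighbours $a,b$ in $H_\Delta$; and $H_\Delta\subseteq G_\Delta$, so $a,b\in V(G_\Delta)$. Also $v\in V(G_\Delta)$ but $v\notin V(H_\Delta)$, hence $v\notin\{a,b\}$. Thus $u\in V(G_\Delta)$ is adjacent in $G$ to the three distinct vertices $a,b,v\in V(G_\Delta)$, giving $d_{G_\Delta}(u)\ge 3$, which contradicts $\Delta(G_\Delta)\le 2$. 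So no deficient vertex exists; every $v\in V(H)$ has $d_H(v)=d_G(v)$, hence every edge of $G$ meeting $V(H)$ already lies in $H$, and connectivity of $G$ forces $H=G$. No fan or Kempe-chain recolouring is needed; the contradiction is exactly the ``three maximum-degree neighbours'' alternative you mentioned, and it is the only alternative that actually arises.
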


 \begin{thm}\label{cut}{\rm\cite{akb}}
{Let $G$ be a connected graph and $\Delta(G_{\Delta})\leq 2$.
Suppose that $G$ has an edge cut of
 size at most $\Delta(G) -2 $
 which is a matching or a star. Then $G$ is Class $1$. }
\end{thm}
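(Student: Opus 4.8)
The plan is to argue by induction on $|V(G)|$, splitting $G$ along the edge cut, colouring the two sides with $\Delta(G)$ colours and then reconciling the two colourings on the few edges of the cut. Write $F$ for the edge cut and let $A,B$ be the partition of $V(G)$ obtained by deleting $F$, so that $e_G(A,B)=|F|\le\Delta(G)-2$ and every edge of $F$ joins $A$ to $B$. The point I would stress first is that the size bound forces each cut endpoint to have small degree inside its own side: if $x\in A$ meets $d_F(x)$ cut edges then $d_{\langle A\rangle}(x)=d_G(x)-d_F(x)\le\Delta(G)-d_F(x)$, so in any proper $\Delta(G)$-edge-colouring of $\langle A\rangle$ the vertex $x$ misses at least $d_F(x)$ colours; in particular the centre of a star cut misses at least $|F|$ colours, exactly the number of cut edges it carries, and any other cut endpoint (of side-degree $\le\Delta(G)-1$) misses at least one. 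This surplus of free colours at the cut vertices is the resource I would spend in the reconciliation step.

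To produce the side-colourings I would contract $B$ to a single new vertex $b$, obtaining a connected graph $G_A^\ast$ in which $b$ has degree $|F|\le\Delta(G)-2$, so $b$ is not a maximum-degree vertex; every other vertex of $A$ keeps its $G$-degree, whence $\Delta(G_A^\ast)=\Delta(G)$ when $A$ meets $G_\Delta$, and the maximum-degree vertices of $G_A^\ast$ are exactly those of $G$ lying in $A$, so $\Delta((G_A^\ast)_\Delta)\le 2$ is inherited. Crucially, the edges at $b$ form a star that is an edge cut of $G_A^\ast$ of size $|F|\le\Delta(G_A^\ast)-2$, so the precise bound $\Delta(G)-2$ makes the hypothesis \emph{self-reproducing} under the reduction. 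Since $G_A^\ast$ has fewer vertices, the inductive hypothesis gives that it is Class $1$, i.e. $\langle A\rangle$ admits a proper $\Delta(G)$-edge-colouring; the symmetric construction colours $\langle B\rangle$. (When $A$ contains no maximum-degree vertex, $\Delta(\langle A\rangle)<\Delta(G)$ and Vizing's theorem already supplies a $\Delta(G)$-colouring, covering the degenerate cases of the induction.)

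It then remains to colour the edges of $F$. With the two side-colourings fixed, colouring a cut edge $xy$ ($x\in A$, $y\in B$) legally means choosing a colour missing at $x$ on the $A$-side and missing at $y$ on the $B$-side, so the task is a list edge-colouring of the small bipartite graph $F$ in which the list of $xy$ is $M_A(x)\cap M_B(y)$. For a matching cut the cut edges are pairwise non-adjacent, and I would reconcile the two colourings by applying a single global colour permutation to the $B$-side together with Kempe $(\alpha,\beta)$-swaps at the endpoints $y_i$ (each misses a colour since $d_{\langle B\rangle}(y_i)\le\Delta(G)-1$) so as to bring a colour missing at $x_i$ into the missing set of $y_i$. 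For a star cut centred at $c^\ast\in A$, all cut edges meet at $c^\ast$, which misses at least $|F|$ colours; here the reconciliation becomes a system of distinct representatives, assigning to the edges $c^\ast y_i$ distinct colours of $M_A(c^\ast)$ that are simultaneously freed at the respective $y_i$ by Kempe swaps.

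The main obstacle I anticipate is precisely this last reconciliation: a Kempe recolouring performed to free a colour at one cut endpoint may re-occupy that colour at another, and in the star case one must secure $|F|$ \emph{distinct} admissible colours at the centre all at once. The leverage that should make it go through is again the hypothesis $|F|\le\Delta(G)-2$, which guarantees enough free colours at each cut vertex to route the alternating chains without collision; organising these swaps so that they do not interfere is the delicate part. Should a direct split-and-merge prove awkward, the fallback is a contradiction argument: if $G$ were Class $2$ then (after passing to a critical subgraph) Theorem \ref{item1} pins its structure down completely—$G_\Delta$ is $2$-regular and $\delta(G)=\Delta(G)-1$—and one contradicts criticality by extending a $\Delta(G)$-colouring of $G\setminus e$, for $e\in F$, using the confinement of Kempe chains to the few edges crossing the cut.
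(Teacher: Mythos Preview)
This theorem is quoted from \cite{akb} and is not proved in the present paper; the paper establishes only the $f$-colouring generalisation (Theorem~\ref{fcut}), and even there the star case with $f(u)=1$---which subsumes the entire $f\equiv 1$ specialisation, i.e.\ exactly the statement you are attacking---is disposed of by invoking Lemma~\ref{lemstar}, itself imported from \cite{self}. So there is no in-paper argument to set your proposal against directly.

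On the mathematics of your primary line, there are two genuine gaps. First, when the star centre $c^\ast$ lies in $A$, contracting $B$ to a single vertex $b$ sends all $|F|$ cut edges to parallel edges $c^\ast b$; your $G_A^\ast$ is then not simple, and neither the inductive hypothesis nor the claim that ``the edges at $b$ form a star cut'' makes sense. (The matching case, and the leaf side of a star, are fine.) A clean repair is to drop the contraction on the centre's side and instead invoke Theorem~\ref{item1}: if $k\ge 2$ then $d_{\langle A\rangle}(c^\ast)\le\Delta(G)-2<\Delta(G)-1$, so $\langle A\rangle$ cannot satisfy Part~3 of Theorem~\ref{item1} and is therefore Class~1 outright. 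Second, the Kempe-chain reconciliation in the star case is precisely where interference bites---a swap freeing a colour at $v_i$ may pass through $v_j$ and undo what you arranged there---and you leave this unresolved; ``enough free colours'' alone does not prevent the chains from colliding.

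Your fallback is the sound route and is essentially what the paper does for its generalisation (in the regime $f(u)\ge 2$): argue by contradiction, use Theorem~\ref{itemf}/\ref{item1} to get criticality and the tight degree condition on every non-core vertex, then \emph{delete the centre $u$} and attach new auxiliary vertices of carefully chosen small degree to each side. The degrees are rigged so that each piece contains a vertex of degree at most $\Delta-2$, violating Part~3 of Theorem~\ref{item1}/\ref{itemf} if that piece were Class~2; hence both pieces are Class~1, and a single global colour permutation---no Kempe chains---aligns the two colourings across the cut. Note that the bound $|F|\le\Delta-2$ is used here not to make the hypothesis self-reproducing under contraction, as in your plan, but to force the auxiliary vertices strictly below the $\Delta-1$ threshold of Part~3.
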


\begin{thm}\label{edgecoloringunicyclic}{\rm\cite{akb}}
{Let $G$ be a connected graph. If every connected component of $G_\Delta$ is
a unicyclic graph  or a tree and $G_\Delta$ is not $2$-regular, then $G$ is Class $1$. }
\end{thm}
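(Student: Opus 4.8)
The plan is to argue by contradiction, assuming $G$ is Class $2$, and to reduce to the case where $G$ is critical; once $G$ is critical the statement collapses to a short counting argument. First I would record the structural consequence of the hypothesis: if $G_\Delta$ is \emph{not} $2$-regular and each of its components is unicyclic or a tree, then $G_\Delta$ must contain a vertex of degree at most $1$. Indeed, a tree on at least one vertex always has a vertex of degree $\le 1$, while a connected unicyclic graph has $|E|=|V|$, hence average degree exactly $2$, so it is either $2$-regular (a cycle) or has a vertex of degree $\le 1$. Since $G_\Delta$ is not $2$-regular, not every component is a cycle, and a non-cycle component supplies the desired low-degree vertex. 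I also note that when $\Delta(G_\Delta)\le 2$ the theorem is immediate from Theorem \ref{item1}, so the substance is in components of $G_\Delta$ carrying a vertex of degree $\ge 3$.

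The critical case is the clean core of the argument. Suppose $G$ is critical and Class $2$. Applying Vizing's Adjacency Lemma (equivalently, the case $f\equiv 1$ of Theorem \ref{2}), every maximum-degree vertex $v$ is adjacent to at least two maximum-degree vertices, so $d_{G_\Delta}(v)\ge 2$ for all $v\in V(G_\Delta)$; that is, $\delta(G_\Delta)\ge 2$. But a tree component cannot have minimum degree $\ge 2$, and a unicyclic component with minimum degree $\ge 2$ is forced (again by $|E|=|V|$) to be $2$-regular, i.e.\ a cycle. Hence every component of $G_\Delta$ is a cycle and $G_\Delta$ is $2$-regular, contradicting the hypothesis. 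Thus no critical counterexample exists.

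It remains to remove the criticality assumption, and this is where the real work lies. I would take $G$ to be a counterexample with the fewest edges and try to prove it is critical: if some edge $e$ has $G-e$ still Class $2$, I want $G-e$ (or one of its components) to be a smaller counterexample. Deletion of an edge can only shrink the core, and since deleting vertices and edges from a disjoint union of trees and unicyclic graphs again yields trees and unicyclic graphs, the ``unicyclic-or-tree'' hypothesis is automatically inherited. The delicate points are (i) keeping $\Delta(G-e)=\Delta(G)$ with $G-e$ connected, and (ii) ensuring the smaller core is still not $2$-regular. To control these I would delete an edge incident to a core vertex $v$ with $d_{G_\Delta}(v)\le 1$ --- whose existence was arranged in the first paragraph --- so that $v$ (and at most one maximum neighbour) leaves the core while the remaining core still exhibits a low-degree vertex, and then extend a $\Delta(G-e)$-coloring of $G-e$ back across $e$ by a Vizing-fan/Kempe-chain recoloring, the forest-or-unicyclic structure near $v$ guaranteeing that the fan closes and a common free colour is produced. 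The main obstacle I anticipate is exactly step (ii): showing that the reduction can always be carried out without accidentally turning a non-$2$-regular core into a $2$-regular one, which is likely to require a short case analysis according to whether $v$ lies on the unique cycle of its component or on an attached tree, with the borderline situation $\Delta(G_\Delta)\le 2$ already disposed of by Theorem \ref{item1}.
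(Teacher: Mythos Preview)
The paper does not prove this statement itself—it is quoted from \cite{akb}—but it proves the $f$-coloring generalization (Theorem~\ref{unicyclic}), and that proof with $f\equiv 1$ is the natural comparison. Your outline matches it closely: handle $\Delta(G_\Delta)\le 2$ via Theorem~\ref{item1}; otherwise delete an edge at a leaf of the core, colour the smaller graph, and extend across the deleted edge by a Vizing-fan argument (this is exactly Lemma~\ref{schf} in the paper). Your ``critical case'' paragraph is correct and is the same Vizing-adjacency idea (Theorem~\ref{2} with $f\equiv 1$) that the paper uses.

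The gap you flag in step~(ii) is real, and you do not close it. Note first that your minimal-counterexample framing cannot by itself show $G$ is critical: if $G-e$ is Class~$2$ but $(G-e)_\Delta$ happens to be $2$-regular, then $G-e$ is not a smaller counterexample and minimality is silent. The paper (for the $f$-version) fills exactly this gap, and the key idea is not a case split on the position of $v$ but a \emph{second} application of Vizing adjacency, this time at $u$. One chooses $e=uv$ with $v$ a leaf of $G_\Delta$ \emph{and} $d_{G_\Delta}(u)\ge 2$ (possible once $\Delta(G_\Delta)\ge 3$). Suppose $H=G-e$ has $H_\Delta$ $2$-regular; if $H$ were Class~$2$ it would be critical by Theorem~\ref{item1}, so $u$ would need at least two maximum-degree neighbours in $H$. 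But $H_\Delta=G_\Delta\setminus\{u,v\}$, and its $2$-regularity forces every $G_\Delta$-neighbour of $u$ other than $v$ to have $G_\Delta$-degree~$3$ while all remaining core vertices have $G_\Delta$-degree~$2$; a degree count in the component of $G_\Delta$ containing $u$ (which must then be unicyclic, not a tree) gives $d_{G_\Delta}(u)=2$ exactly. Hence $u$ has precisely one maximum neighbour in $H$, contradicting Vizing adjacency, so $H$ is Class~$1$ after all. Thus the unicyclic hypothesis is used precisely here, to pin $d_{G_\Delta}(u)$ down to~$2$; once you see this, the extension via Lemma~\ref{schf} completes the proof.
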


In \cite{self}, Theorem  \ref{item1} and Theorem \ref{cut} in a case that the edge cut is a matching were generalized to $f$-colorings.

\begin{thm}\label{itemf}{\rm\cite{self}}
{ Let $G$ be a connected  $f$-Class $2$  graph with $\Delta
(G_{\Delta_f})\leq 2$. Then the followings hold:

  {\rm $1.$} $G$ is  $f$-critical;

  {\rm $2.$} $G_{\Delta_f}$ is  $2$-regular;

  {\rm $3.$} $d_G(v)= f(v)\Delta_{f}(G)-1$, for every $v \in V(G)\setminus V(G_{\Delta_f})$.
 }
\end{thm}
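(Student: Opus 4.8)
Throughout write $k=\Delta_f(G)$, so that $\chi'_f(G)=k+1$ by Theorem \ref{vizing} and the hypothesis that $G$ is $f$-Class $2$. The plan is to follow the architecture of the proof of its ordinary-coloring ancestor, Theorem \ref{item1}, replacing Vizing's Adjacency Lemma by its $f$-analogue (Theorem \ref{2}) and Fournier's forest theorem by Theorem \ref{forest}. I would establish the three assertions in the order $1\Rightarrow 2\Rightarrow 3$, since part $2$ falls out of part $1$ immediately, while part $3$ leans on both.

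For part $2$, assume part $1$ has been settled, so that $G$ itself is $f$-critical. Let $w$ be any $f$-maximum vertex. By Theorem \ref{2}, $w$ is adjacent to at least $2f(w)$ $f$-maximum vertices, all of which lie in $G_{\Delta_f}$, whence $d_{G_{\Delta_f}}(w)\ge 2f(w)\ge 2$. Since $\Delta(G_{\Delta_f})\le 2$ by hypothesis, this forces $d_{G_{\Delta_f}}(w)=2$ (and incidentally $f(w)=1$) for every such $w$. Thus every vertex of $G_{\Delta_f}$ has degree exactly $2$, i.e. $G_{\Delta_f}$ is $2$-regular.

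The heart of the matter is part $1$. Since deleting an edge cannot raise $\chi'_f$ and, by Theorem \ref{vizing}, $\chi'_f(G\setminus e)=k+1$ would force $\Delta_f(G\setminus e)=k$, the graph $G$ fails to be $f$-critical exactly when it has a subgraph with fewer edges that is still $f$-Class $2$ with $\Delta_f=k$; equivalently, I must show $\chi'_f(G\setminus e)\le k$ for every edge $e$. Note that part $1$ cannot invoke Theorem \ref{2}, which presupposes criticality, so it must be done by hand. When $e$ meets $G_{\Delta_f}$ in such a way that $(G\setminus e)_{\Delta_f}$ becomes a forest, Theorem \ref{forest} gives the conclusion at once. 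The remaining edges—in particular those disjoint from the $f$-core, where $(G\setminus e)_{\Delta_f}=G_{\Delta_f}$ still carries cycles—require a genuine recoloring argument: one extends an $f$-coloring with $k$ colors across $e$ using Vizing-type fans together with Kempe two-color interchanges, where a color is regarded as \emph{available} at $v$ when it occurs fewer than $f(v)$ times there, and the obstruction is localized on the cyclic components of the core. This recoloring is the main obstacle, precisely as in the ordinary case of Theorem \ref{item1}; the extra bookkeeping over Vizing's classical argument is that a color may legitimately repeat up to $f(v)$ times at $v$, so the familiar alternating paths must be replaced by alternating subgraphs and color multiplicities tracked throughout.

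For part $3$, suppose $G$ is $f$-critical (part $1$) with $G_{\Delta_f}$ $2$-regular (part $2$), and let $v\notin V(G_{\Delta_f})$. As $v$ is not $f$-maximum we already have $d_G(v)\le f(v)k-1$, so it suffices to rule out $d_G(v)\le f(v)k-2$. Assuming the latter, fix an edge $e=vu$; criticality yields an $f$-coloring of $G\setminus e$ with $k$ colors. The deficiency at $v$ guarantees at least one free color there, and a color is likewise free at $u$; if the two can be matched—directly, or after a two-color interchange along an alternating subgraph issuing from $u$—the coloring extends to $e$, contradicting $\chi'_f(G)=k+1$. The only delicate point is again making the interchange legal for $f$-colorings and verifying that the surplus of free colors created by $d_G(v)\le f(v)k-2$ cannot all be blocked simultaneously at $u$. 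Finally, for $k=2$ the statement is vacuous, since Theorem \ref{2} gives $d_G(v)\ge 2f(v)>f(v)k-1$ for every vertex and hence leaves no non-$f$-maximum vertex at all.
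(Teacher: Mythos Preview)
The paper does not prove this theorem: it is quoted verbatim from \cite{self} and used as a black box throughout, so there is no in-paper argument to compare your proposal against. Your overall architecture—establish part~1 first, read off part~2 from Theorem~\ref{2}, then handle part~3—is the natural one, and your proof of part~2 (given part~1) is complete and correct, including the useful byproduct $f(w)=1$ for every $f$-maximum vertex $w$.

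What remains is that parts~1 and~3 are outlines rather than proofs. For part~1 you correctly isolate the hard case—edges $e$ with $(G\setminus e)_{\Delta_f}$ still containing a cycle, where Theorem~\ref{forest} gives nothing—and you name the right tool (Vizing-type fans plus Kempe exchanges, with availability meaning ``fewer than $f(v)$ occurrences''), but the recoloring itself is not carried out. Likewise in part~3: saying the free colors at $v$ and $u$ ``can be matched, directly or after a two-color interchange'' is exactly the assertion requiring proof; in Vizing's argument the obstruction is that the alternating chain from $u$ may end at $v$, and you must show why the extra missing color forced by $d_G(v)\le f(v)k-2$ breaks this, while in the $f$-setting the two-colored object need not be a path. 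You acknowledge both gaps candidly, so nothing you wrote is wrong, but as it stands the proposal is a plan (matching, presumably, the strategy of \cite{self}) rather than a self-contained proof.
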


\begin{thm}\label{fcutmatching}{\rm\cite{self}}
{Let $G$ be a connected graph and $\Delta(G_{\Delta_f})\leq 2$.
Suppose that $G$ has an edge cut of
 size at most $\Delta_f(G) -2 $
 which is a matching. Then $G$ is   $f$-Class $1$ and $G$ has a $\Delta_f(G)$-coloring in which the edges of the edge cut have different colors.}
\end{thm}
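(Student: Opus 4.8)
Write $\Delta:=\Delta_f(G)$ and let $M=\{u_1w_1,\dots,u_kw_k\}$ be the matching edge cut, where $k\le \Delta-2$. Since $G$ is connected, any edge cut is non-empty, so $\Delta\ge 3$. Deleting $M$ splits $G$, and I group the resulting components into two classes so that every edge of $M$ joins them; call the induced subgraphs $G_A=\langle A\rangle$ and $G_B=\langle B\rangle$, with $u_i\in A$ and $w_i\in B$. Because $M$ is a matching, the $2k$ endpoints are distinct and each $u_i$ (resp. $w_i$) lies on exactly one edge of $M$, whence $d_{G_A}(u_i)=d_G(u_i)-1$ and $d_{G_B}(w_i)=d_G(w_i)-1$.

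The plan is to isolate the real content in a gluing lemma: if both $G_A$ and $G_B$ admit a $\Delta$-coloring, then $G$ admits a $\Delta$-coloring in which the $k$ edges of $M$ receive pairwise different colors. Granting this, the theorem follows quickly. Suppose first, for contradiction, that $G$ is $f$-Class $2$. By Theorem \ref{itemf} the graph $G$ is then $f$-critical, so $\chi'_f(G\setminus e_1)=\Delta$ for the edge $e_1=u_1w_1$; restricting a $\Delta$-coloring of $G\setminus e_1$ gives $\Delta$-colorings of the subgraphs $G_A$ and $G_B$. The gluing lemma now produces a $\Delta$-coloring of $G$, contradicting $f$-Class $2$. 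Hence $G$ is $f$-Class $1$; in particular $G$ itself then has a $\Delta$-coloring, so $G_A$ and $G_B$ are $\Delta$-colorable unconditionally, and a further application of the gluing lemma yields the required $\Delta$-coloring of $G$ in which the cut edges get distinct colors. Routing the argument through the lemma is what lets me obtain side-colorability for free (via criticality), bypassing the awkward point that a component of $G_A$ or $G_B$ could on its own be $f$-Class $2$.

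The heart of the proof is therefore the gluing lemma, which I would cast as a system-of-distinct-representatives problem. Fix $\Delta$-colorings $\phi_A$ of $G_A$ and $\phi_B$ of $G_B$ on the common palette $\{1,\dots,\Delta\}$, and call a color free at a vertex $v$ if it occurs fewer than $f(v)$ times among the edges at $v$. Since $d_{G_A}(u_i)=d_G(u_i)-1\le f(u_i)\Delta-1$, a capacity count leaves at least one free color at each $u_i$, and likewise at each $w_i$. Put $S_i=F_A(u_i)\cap F_B(w_i)$, the colors simultaneously free at the two ends of $e_i$; pairwise distinct representatives $c_i\in S_i$ extend $\phi_A\sqcup\phi_B$ to a $\Delta$-coloring of $G$ with the cut edges rainbow-colored, since the $u_i,w_i$ are all distinct and so no two cut edges meet. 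Two kinds of freedom feed the argument: the palette of $G_B$ may be permuted by any $\sigma$ (no edge of $M$ is yet colored), replacing each $F_B(w_i)$ by $\sigma(F_B(w_i))$, and the colorings $\phi_A,\phi_B$ may themselves be re-chosen. The goal is to use these, together with the surplus $\Delta-k\ge 2$ of colors, to force Hall's condition $|\bigcup_{i\in I}S_i|\ge|I|$ for every $I\subseteq\{1,\dots,k\}$.

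The main obstacle is precisely verifying this Hall condition: a priori each $S_i$ is only guaranteed nonempty and could collapse to a single color shared by many indices, so for a fixed pair of colorings an SDR need not exist. I expect essentially all the work to lie here. The two ingredients I would lean on are, first, the hypothesis $\Delta(G_{\Delta_f})\le 2$, which forces every cut endpoint to have very restricted degree — an $f$-maximum endpoint then contributes exactly one free color and a non-maximum one contributes two — so the free sets are well understood; and second, alternating-chain/recoloring arguments at individual endpoints that relocate the unique free color at an $f$-maximum $u_i$ or $w_i$, spreading the free colors across the palette. Combined with the slack $\Delta-k\ge 2$, which leaves at least two colors beyond those the matching consumes, these should let me re-choose $\phi_A$, $\phi_B$ and $\sigma$ so that Hall's condition holds and the distinct representatives exist. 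The matching hypothesis is used throughout to keep the per-edge constraints independent, so that the only genuinely global requirement is the distinctness encoded by the SDR.
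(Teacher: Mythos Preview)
This theorem is merely quoted in the present paper from the earlier article \cite{self}; no proof is supplied here, so there is nothing in-paper to compare your attempt against.

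Evaluating your proposal on its own: the overall architecture---use $f$-criticality (Theorem~\ref{itemf}) to obtain $\Delta$-colorings of the two sides, then glue across the matching---is sensible, and routing through a ``gluing lemma'' to harvest the side-colorability for free is a nice device. But the gluing lemma itself, which you correctly identify as carrying essentially all the content, is not proved. Your sketch (permute the $B$-palette, run alternating-chain recolorings at $f$-maximum endpoints, invoke the slack $\Delta-k\ge 2$) does not constitute an argument: nothing you wrote rules out, for instance, all $u_i$ being $f$-maximum with the \emph{same} single free color under $\phi_A$, a configuration a palette permutation on the $B$-side cannot fix, and for which the availability and effect of Kempe-type chains in the $f$-coloring setting is not established. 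Two subsidiary points are also off. First, the hypothesis $\Delta(G_{\Delta_f})\le 2$ does not ``force every cut endpoint to have very restricted degree''; it restricts degrees \emph{inside} the $f$-core only, and the degree information you actually want on non-maximum endpoints ($d_G(v)=f(v)\Delta-1$) comes from Theorem~\ref{itemf}, under the contradiction hypothesis. Second, even granting that, ``two free slots'' at a non-maximum endpoint with $f(v)\ge 2$ need not mean two free \emph{colors}: both slots may sit on a single color, so your size estimates for the sets $F_A(u_i)$, $F_B(w_i)$ and hence for $S_i$ are not justified. As it stands the proposal locates the right difficulty but leaves it open.
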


In this paper, we prove Theorems \ref{cut} and   \ref{edgecoloringunicyclic} in $f$-coloring of graphs.
Moreover, we show that except one  graph, every connected claw-free graph $G$ whose $f$-core is $2$-regular with a vertex $v$ such that  $f(v)\neq 1$ is $f$-Class $1$.

\section{Results}

In this section, we generalize Theorems \ref{cut},  \ref{edgecoloringunicyclic}  and   we obtain some results in $f$-coloring of claw-free graphs whose $f$-core is $2$-regular.
To see this, first we want to prove that if a  connected graph $G$ with  $\Delta(G_{\Delta_f})\leq 2$ has
an edge cut of
 size at most $\Delta_f(G) -2 $
 which is a matching or  a star, then $G$ is $f$-Class $1$. To do this, first  we need a lemma which is proved in  {\rm\cite{self}}.

\begin{lem}\label{lemstar}{\rm\cite{self}}
{Let $G$ be a connected graph with $\Delta(G_{\Delta_f})\leq 2$.
Suppose that $F=\{u v_1,\ldots, u v_k\}$,  $k\leq \Delta_f(G)-2$, is an
edge cut of $G$ and $f(u)=1$. Then $G$ is   $f$-Class $1$.  }
\end{lem}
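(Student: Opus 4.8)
The plan is to prove the statement by assuming $G$ is $f$-Class $2$ and deriving a contradiction, after first trimming the cut down to a clean star across a single side. I would begin by choosing one connected component $A$ of $G\setminus F$ with $u\notin A$, setting $B=V(G)\setminus A$, and letting $m=e_G(A,B)$. Every $A$--$B$ edge lies in $F$ and hence meets $u$, so these $m$ crossing edges already form a star edge cut $\{uv_1,\dots,uv_m\}$ centred at $u$ with $1\le m\le k\le\Delta_f(G)-2$; in particular $\Delta_f(G)\ge 3$. If $m=1$ the cut is a single edge, i.e.\ a matching of size $1\le\Delta_f(G)-2$, and Theorem~\ref{fcutmatching} immediately yields that $G$ is $f$-Class $1$. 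The substance of the argument is therefore the case $m\ge 2$.

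For $m\ge 2$ I would split $G$ at the cut vertex $u$ into $G_1=\langle B\rangle$ and $G_2=\langle A\cup\{u\}\rangle$. These are connected, meet only in $u$, and satisfy $E(G)=E(G_1)\sqcup E(G_2)$; every vertex other than $u$ keeps its full $G$-degree inside its own piece, while $d_{G_1}(u)=d_G(u)-m$ and $d_{G_2}(u)=m$. Consequently $\Delta_f(G_i)\le\Delta_f(G)$. The central claim is that each $G_i$ admits a $\Delta_f(G)$-coloring. If $\Delta_f(G_i)<\Delta_f(G)$ this is free from the Vizing-type bound of Theorem~\ref{vizing}, since $\chi'_f(G_i)\le\Delta_f(G_i)+1\le\Delta_f(G)$. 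If instead $\Delta_f(G_i)=\Delta_f(G)$, then the $f$-core of $G_i$ is an induced subgraph of $G_{\Delta_f}$, so $\Delta(G_{i,\Delta_f})\le 2$, and I argue by contradiction: were $G_i$ $f$-Class $2$, Theorem~\ref{itemf} would apply and force $d_{G_i}(u)=f(u)\Delta_f(G_i)-1=\Delta_f(G)-1$, because $u$ is \emph{not} $f$-maximum in $G_i$ (its degree there fell below $\Delta_f(G)$). For $G_2$ this reads $m=\Delta_f(G)-1$, impossible as $m\le\Delta_f(G)-2$; for $G_1$ it reads $d_G(u)=\Delta_f(G)-1+m\ge\Delta_f(G)+1$, impossible as $f(u)=1$ forces $d_G(u)\le\Delta_f(G)$. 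Hence both pieces are $f$-Class $1$.

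It then remains to glue the two colorings. Fix $\Delta_f(G)$-colorings $\phi_1,\phi_2$ of $G_1,G_2$ with palette $\{1,\dots,\Delta_f(G)\}$. Because $f(u)=1$, $\phi_1$ uses a set $C_1$ of $d_G(u)-m$ distinct colors at $u$ and $\phi_2$ a set $C_2$ of $m$ distinct colors at $u$. Since $|C_1|+m=d_G(u)\le\Delta_f(G)$, the palette has at least $m$ colors outside $C_1$, so I can pick a permutation $\pi$ of the colors mapping $C_2$ injectively into the complement of $C_1$; replacing $\phi_2$ by $\pi\circ\phi_2$ keeps it a valid $f$-coloring of $G_2$. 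The union $\phi_1\cup(\pi\circ\phi_2)$ colors $E(G)$: at $u$ the colors form the disjoint union $C_1\sqcup\pi(C_2)$, each occurring once, while every other vertex lies entirely in one piece and inherits a valid local coloring. Thus $G$ has a $\Delta_f(G)$-coloring and is $f$-Class $1$.

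The main obstacle is the balanced subcase $\Delta_f(G_i)=\Delta_f(G)$, and the design of the decomposition is precisely aimed at it: the purpose of cutting at $u$ is to drop $\deg(u)$ in each piece so that $u$ becomes a non-$f$-maximum vertex, at which point the rigid degree equation of Theorem~\ref{itemf}(3) is contradicted. Making this work rests on lining up the degree bookkeeping, the connectedness of $G_1$ and $G_2$, and the two numerical constraints $m\ge 2$ and $m\le\Delta_f(G)-2$; separating out the $m=1$ case for Theorem~\ref{fcutmatching} is essential, since for $m=1$ the equation $d_G(u)=\Delta_f(G)-1+m$ is satisfiable and the contradiction for $G_1$ evaporates.
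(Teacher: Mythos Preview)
The paper does not prove this lemma; it is quoted verbatim from \cite{self} and used as a black box, so there is no in-paper argument to compare against. That said, your argument is sound and is exactly the kind of vertex-splitting proof one would expect: trimming the star cut to a single component $A$ to ensure $G_1=\langle B\rangle$ and $G_2=\langle A\cup\{u\}\rangle$ are connected, observing that $u$ drops out of the $f$-core on each side so that $(G_i)_{\Delta_f}$ is an induced subgraph of $G_{\Delta_f}$ (hence has maximum degree $\le 2$), and then using the degree identity of Theorem~\ref{itemf}(3) at $u$ to force a numerical contradiction on each side. The gluing step via a color permutation at $u$ is clean because $f(u)=1$ makes the color multisets at $u$ genuine sets with $|C_1|+|C_2|=d_G(u)\le\Delta_f(G)$. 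Your separation of the $m=1$ case is also necessary for exactly the reason you state: with $m=1$ the equation $d_G(u)=\Delta_f(G)$ is compatible with $f(u)=1$, so the $G_1$ contradiction disappears, and handing that case to Theorem~\ref{fcutmatching} is the right move.
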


\begin{thm} \label{fcut}
{Let $G$ be a connected graph and $\Delta(G_{\Delta_f})\leq 2$.
Suppose that $G$ has an edge cut of
 size at most $\Delta_f(G) -2 $
 which is a matching or  a star. Then $G$ is $f$-Class $1$. }
\end{thm}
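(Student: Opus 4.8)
The plan is to dispose of the easy cases first and then concentrate on the single remaining case. If the edge cut is a matching, then $G$ is $f$-Class $1$ immediately by Theorem \ref{fcutmatching}. So I may assume the edge cut is a star $F=\{uv_1,\dots,uv_k\}$ centered at $u$, with $k\le\Delta_f(G)-2$. If $f(u)=1$, then Lemma \ref{lemstar} applies and we are done. Hence the whole content of the theorem is the case $f(u)\ge 2$, and I would argue this by contradiction: suppose $G$ is $f$-Class $2$.

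Next I would extract structure from Theorem \ref{itemf}: $G$ is $f$-critical, $G_{\Delta_f}$ is $2$-regular, and $d_G(v)=f(v)\Delta_f(G)-1$ for every $v\notin V(G_{\Delta_f})$. Combining Theorem \ref{2} with the $2$-regularity of $G_{\Delta_f}$ shows that every $f$-maximum vertex $w$ satisfies $f(w)=1$; since $f(u)\ge 2$, this forces $u\notin V(G_{\Delta_f})$ and hence $d_G(u)=f(u)\Delta_f(G)-1$. Because $F$ is a star, the vertex set splits as $V(G)=V_1\sqcup V_2$ with $u\in V_1$, all cut edges running from $u$ to $V_2$; put $G_1=\langle V_1\rangle$ and $G_2=\langle V_2\cup\{u\}\rangle$, two graphs meeting only in $u$, with $d_{G_1}(u)=f(u)\Delta_f(G)-1-k$ and $d_{G_2}(u)=k$. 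Since $u\notin V(G_{\Delta_f})$, no edge of the $2$-regular graph $G_{\Delta_f}$ is incident with $u$, so each cycle of $G_{\Delta_f}$ lies entirely inside $G_1$ or inside $G_2$.

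I would then show that each piece admits a $\Delta_f(G)$-coloring. For $G_1$ this is immediate: $G_1\subseteq G\setminus uv_1$, and $G\setminus uv_1$ has a $\Delta_f(G)$-coloring by $f$-criticality, so its restriction colors $G_1$. For $G_2$ one checks $\Delta_f(G_2)\le\Delta_f(G)$ (the only new degree is $d_{G_2}(u)=k$, and $\lceil k/f(u)\rceil\le k\le\Delta_f(G)-2$); if $\Delta_f(G_2)<\Delta_f(G)$ then $\chi'_f(G_2)\le\Delta_f(G_2)+1\le\Delta_f(G)$ by Theorem \ref{vizing}, while if $\Delta_f(G_2)=\Delta_f(G)$ and $G_2$ were $f$-Class $2$, then Theorem \ref{itemf} applied to $G_2$ (whose $f$-core has maximum degree $\le 2$, its $f$-maximum vertices other than $u$ being $f$-maximum in $G$, and $u$ being non-$f$-maximum in $G_2$) would give $d_{G_2}(u)=f(u)\Delta_f(G)-1$, contradicting $d_{G_2}(u)=k\le\Delta_f(G)-2$. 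Either way $G_2$ has a $\Delta_f(G)$-coloring.

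The main obstacle is to glue a $\Delta_f(G)$-coloring of $G_1$ to one of $G_2$ into a single $\Delta_f(G)$-coloring of $G$: since $G_1$ and $G_2$ meet only in $u$, two such colorings are automatically compatible at every vertex except $u$, where we must ensure each color is used at most $f(u)$ times in total. Fixing a coloring on $G_1$ and writing $a_\gamma$ for the number of its edges at $u$ colored $\gamma$, the total residual capacity at $u$ is $\sum_\gamma\bigl(f(u)-a_\gamma\bigr)=f(u)\Delta_f(G)-d_{G_1}(u)=1+k$, one more than the $k$ edges of $G_2$ at $u$ that must be placed. The delicate point—which I expect to be the crux—is that one cannot simply recolor $G_2$ by a global color permutation to fit these residual slots, because the load multisets at $u$ coming from the two colorings need not be matchable with pairwise sums $\le f(u)$. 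I would therefore not take arbitrary colorings, but produce a $\Delta_f(G)$-coloring of $G_2$ that respects the prescribed per-color capacity $f(u)-a_\gamma$ at $u$, exploiting the strict slack $1+k>k$, the bound $k\le\Delta_f(G)-2$ (so $u$ is far from saturated in $G_2$), and $f(u)\ge 2$ (so several cut edges may share a color). Concretely, I would start from the restriction of the $\Delta_f(G)$-coloring of $G\setminus uv_1$ and extend across $uv_1$ by an alternating recoloring argument confined to $G_2$; the star structure guarantees that such a recoloring chain can leave $G_2$ only through $u$, where the surplus capacity absorbs it. The resulting $\Delta_f(G)$-coloring of $G$ contradicts the assumption that $G$ is $f$-Class $2$, completing the proof.
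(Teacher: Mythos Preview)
Your opening reductions and the structural extraction from Theorems~\ref{itemf} and~\ref{2} are correct and match the paper exactly: one may assume the cut is a star $F=\{uv_1,\dots,uv_k\}$ with $k\ge 2$ and $f(u)\ge 2$, and then $u\notin V(G_{\Delta_f})$ with $d_G(u)=f(u)\Delta_f(G)-1$. Your verifications that $G_1$ and $G_2$ each admit a $\Delta_f(G)$-coloring are also fine.

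The gap is the gluing step. You correctly identify it as the crux, but the argument you offer---take the $f$-coloring of $G\setminus uv_1$ given by criticality and ``extend across $uv_1$ by an alternating recoloring argument confined to $G_2$''---is not a proof. Both $u$ and $v_1$ have a missing color in that coloring, but when these differ the obvious Kempe swap on the $\alpha\beta$-component through $v_1$ can, and typically will, pass through $u$ (your own observation), and swapping there may simply exchange which endpoint lacks which color without producing a common missing color. In ordinary edge coloring this obstruction is exactly what forces Vizing's fan argument rather than a bare Kempe chain; in $f$-coloring the $\alpha\beta$-subgraph need not even be a union of paths and cycles, and Lemma~\ref{schf} does not apply since neighbors of $u$ or $v_1$ in $G_{\Delta_f}$ have no missing color. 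Your appeal to ``surplus capacity absorbs it'' does not address this, and no concrete recoloring scheme is given.

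The paper avoids the gluing problem altogether by a different construction. Instead of keeping $u$ intact in both pieces, it deletes $u$ and replaces it by three new vertices $x,y,z$: the vertex $x$ is joined to $\Delta_f(G)-k$ of the $G_1$-neighbors of $u$ with $f'(x)=1$; the vertex $y$ is joined to the remaining $G_1$-neighbors with $f'(y)=f(u)-1$; and $z$ is joined to $v_1,\dots,v_k$ with $f'(z)=1$. A short computation shows $\Delta_{f'}$ is unchanged and that $x$ and $z$ violate the degree condition of Theorem~\ref{itemf}(3), so both resulting graphs are $f'$-Class~$1$. Because $f'(x)=f'(z)=1$, the edges at $x$ receive $\Delta_f(G)-k$ distinct colors and those at $z$ receive $k$ distinct colors; after a permutation these $\Delta_f(G)$ colors are all different. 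Reassembling at $u$, each color then appears at most once via $x$ or $z$ and at most $f(u)-1$ times via $y$, hence at most $f(u)$ times in total. This is the missing idea: splitting $u$ with tailored $f'$-values converts the delicate gluing constraint into a trivial count.
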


\begin{proof}
{Clearly, we can assume that $\Delta_f(G)\geq 3$. By Theorem \ref{fcutmatching}, we can assume that the edge cut is a star.
 Let $F=\{u v_1,\ldots, u v_k\}$,  $k\leq \Delta_f(G)-2$, is an
edge cut and $V(G)=X \cup Y$, $X \cap Y= \emptyset$ and every edge
of $F$ has one end point in $X$ and other end point in $Y$. Let
$G_1$ and $G_2$ be the induced subgraphs on $X$ and $Y$,
respectively.  With no loss of generality, assume that $u \in
V(G_1)$ and $v_i \in V(G_2)$, for $i=1,\ldots,k$.
 By Lemma \ref{lemstar} and Theorem \ref{fcutmatching}, we can assume that $k,f(u)\ge 2$.
 To the contrary assume that $G$ is $f$-Class $2$. Since $\Delta(G_{\Delta_f})\le 2$ by Theorem \ref{itemf},  $G$ is $f$-critical and noting that $f(u)\geq 2$, by Theorem \ref{2}, $u\not \in V(G_{\Delta_f})$. Thus by Theorem \ref{itemf}, $d_G(u)=f(u)\Delta_f(G)-1\ge 2\Delta_f(G)-1$.
 Also, note that since $G$ is $f$-critical, by Theorem \ref{2}, $|V(G_i)\cap V(G_{\Delta_f})|\ge 2$, for $i=1,2$. Let $N_{G_1}(u)=\{w_1,\ldots,w_t\}$.
Note that since $G$ is $f$-Class $2$ and $\Delta_f(G)\geq 3$, by Theorem \ref{fcutmatching}, $G$ has no  cut edge. Thus there exists a component $D$ of $G_1\setminus \{u\}$ such that $|N_D(u)|\ge 2$. With no loss of generality, let $w_1,w_t\in V(D)$.
     Add three new vertices $x$, $y$  and $z$ to $G\setminus\{u\}$. Then join  $x,y$ and $z$  to $\{w_1,\ldots,w_{\Delta_f(G)-k}\}$,  $\{w_{\Delta_f(G)-k+1},\ldots,w_t\}$
     and $\{v_1,\ldots,v_k\}$, respectively. Let $H=\langle (V(G_1)\setminus \{u\})\cup \{x,y\}\rangle$
     and $K=\langle V(G_2)\cup \{z\}\rangle$. Let $f':V(H\cup K)\longrightarrow \mathbb{N}$ be a function defined by
     \begin{center}
$f'(v)=
\begin{cases}
   f(v)      &v \in V(G),\\
   1          &v\in\{x,z\},\\
   f(v)-1       & v=y.\
\end{cases}$
\end{center}

      Note that $H$ and
$K$ are connected. Moreover, $\max(\Delta_{f'}(H), \Delta_{f'}(K))\le \Delta_f(G)$, because
\begin{center}
$\frac{d(v)}{f'(v)}=
\begin{cases}
   \frac{d(v)}{f(v)}\le \Delta_f(G)      &v \in V(G),\\
   \Delta_{f}(G)-k<\Delta_f(G)          &v=x,\\
   k\le \Delta_{f}(G)-2<\Delta_f(G)      &v=z,\\
   \frac{d_G(u)-\Delta_f(G)}{f(u)-1}=\frac{f(u)\Delta_f(G)-1-\Delta_f(G)}{f(u)-1}<\Delta_f(G)       & v=y.\
\end{cases}$
\end{center}
 and since  $|V(G_i)\cap V(G_{\Delta_f})|\ge 2$, for $i=1,2$,   $\Delta_{f'}(H)= \Delta_{f'}(K)=\Delta_f(G)$.

We claim that both $H$ and  $K$ are $f'$-Class $1$. Note that if $H$ is $f'$-Class $2$,  then by Theorem \ref{itemf},
$d_H(x)=f'(x)\Delta_{f'}(H)-1=\Delta_f(G)-1$, but $d_{H}(x)=\Delta_{f}(G)-k\leq \Delta_f(G)-2$, a
contradiction.  So, there exists an $f'$-coloring
$\phi$ of $H$ by colors $\{1,\ldots,\Delta_{f'}(H)\}$. Similarly, there is an $f'$-coloring
$\theta$ of $K$ by colors $\{1,\ldots,\Delta_{f'}(K)\}$ and the claim is proved.

By a suitable permutation of  colors, one may
assume that $\{\phi(xw_1),\ldots,\phi(xw_{\Delta_{f}(G)-k}),\theta(zv_1),\ldots,\theta(zv_k)\}$ are distinct.
 Now, define an $f$-coloring $c: E(G)
\longrightarrow \{1, \ldots, \Delta_{f}(G)\} $ as follows:
\begin{center}
$\begin{cases}
c(e)= \phi(e)         &{\rm for\,\,every}\,\,\,e\in E(G_1\setminus\{u\}),\\
c(e')= \theta(e')            &{\rm for\,\,every}\,\,\,e'\in E(G_2),\\
c(uv_i)=\theta(zv_i)      &{\rm for}\,\,\,i=1,\ldots,k,\\
  c(uw_i)=\phi(xw_i)      &{\rm for}\,\,\,i=1,\ldots,\Delta_{f}(G)-k,\\
   c(uw_i)=\phi(yw_i)     &{\rm for}\,\,\,i=\Delta_{f}(G)-k+1,\ldots,t.
   \end{cases}$
\end{center}
 This implies that $G$ is $f$-Class $1$ which is a contradiction and the proof is complete.
 }
\end{proof}

Now, we want to prove another result in $f$-coloring of graphs which classify some of  $f$-Class $1$ graphs. To do this, we need the following lemma.

\begin{lem}\label{schf}{\rm\cite{schf}}
{Let $C$ denote the set of colors available to color the edges of a simple graph $G$. Suppose that $e = uv$
is an uncolored edge in $G$, and graph $G\setminus \{e\}$ is $f$-colored with the colors in $C$. If for every neighbor $x$ of either $u$ or $v$, there exists a color $\alpha_x$ which appears at most $f(x)-1$ times, then there exists an $f$-coloring of $G$ using colors of $C$.
}
\end{lem}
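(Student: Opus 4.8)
The plan is to prove this as an alternating-path (Kempe-chain) extension result, generalizing the classical argument for proper edge colorings. Call a color $c$ \emph{free at a vertex} $x$ if it is used on at most $f(x)-1$ edges at $x$, so that one more edge of color $c$ may legitimately be added at $x$. Since $e=uv$ is uncolored in $G$, we have both $u\in N(v)$ and $v\in N(u)$, so the hypothesis already supplies a color $\alpha$ free at $u$ and a color $\beta$ free at $v$. First I would dispose of the easy case: if some single color is free at both $u$ and $v$, I color $e$ with it and stop. So assume the sets of free colors at $u$ and at $v$ are disjoint; then I may fix $\alpha$ free at $u$ (hence used exactly $f(v)$ times at $v$) and $\beta$ free at $v$ (hence used exactly $f(u)$ times at $u$).

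Next I would build an alternating trail $T\colon v=z_0,z_1,\dots,z_k$ of distinct edges, starting along an $\alpha$-edge at $v$ and alternating $\alpha,\beta,\alpha,\dots$, continuing from the current vertex whenever the color needed to maintain the alternation is saturated there and an unused edge of that color is available, and stopping otherwise. The key observation is a bookkeeping fact: at every \emph{internal} occurrence of a vertex the trail uses one $\alpha$-edge and one $\beta$-edge, so swapping $\alpha\leftrightarrow\beta$ along all of $T$ leaves the $\alpha$- and $\beta$-counts unchanged there; only the two \emph{ends} of $T$ change. A short counting argument shows the trail can never get stuck at a saturated vertex with all the needed edges already used (that would force strictly more used edges of the arriving color at $z_k$ than actually exist), so $T$ must terminate at a vertex $z_k$ where the next color is free. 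Performing the swap then makes $\alpha$ free at $v$ (its first edge turns from $\alpha$ to $\beta$, dropping the $\alpha$-count at $v$ to $f(v)-1$), while validity at $z_k$ holds because we stopped exactly at a vertex with the required slack. If the swap does not disturb $u$, then $\alpha$ is now free at both $u$ and $v$, and I color $e$ with $\alpha$.

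The hard part will be the single degenerate configuration in which the trail terminates at $u$ itself, reached along a $\beta$-edge; there the swap raises the $\alpha$-count at $u$ and may saturate it, destroying the common free color. I expect this to be exactly where the full strength of the hypothesis---that \emph{every} neighbor of $u$ or $v$ has slack, not merely $u$ and $v$---is needed, playing the role that internal-vertex slack plays in ruling out the non-extendable odd-cycle type obstructions. Concretely, after the swap the color $\beta$ becomes free at $u$, so the only genuinely obstructed case is the doubly-tight one in which $\alpha$ is used $f(u)-1$ times at $u$ and $\beta$ is used $f(v)-1$ times at $v$ simultaneously; in that case I would repair the chain near its $u$-end using the free color guaranteed at the neighbor $z_{k-1}\in N(u)$ (recoloring the edge $z_{k-1}u$ to that free color to liberate $\beta$ at $u$), or re-select the starting colors among the free colors of $u$ and $v$, so that the chain no longer joins $u$ to $v$. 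Verifying that this local repair is always available under the neighbor-slack hypothesis, and that it yields a legitimate $f$-coloring of all of $G$, is the crux of the argument.
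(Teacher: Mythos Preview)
First, a framing note: the paper does not prove this lemma; it quotes it from Zhang and Liu \cite{schf} and uses it as a black box, so there is no in-paper proof to compare against. The question is simply whether your sketch can be completed.

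It cannot along the lines you propose. You correctly isolate the obstruction---the $(\alpha,\beta)$-trail out of $v$ may end at $u$ via a $\beta$-edge---but look at what the swap then does at the endpoints: at $v$ the first edge turns from $\alpha$ to $\beta$, making $\alpha$ free and $\beta$ saturated there; at $u$ the last edge turns from $\beta$ to $\alpha$, making $\beta$ free and $\alpha$ (possibly) saturated there. You have merely interchanged the roles of $\alpha$ and $\beta$, with no net progress. Your repair---recolor $z_{k-1}u$ with a color $\gamma$ free at $z_{k-1}$---needs $\gamma$ to be free at $u$ as well, and nothing in the hypothesis guarantees that; if $\gamma$ is saturated at $u$ you are back where you started, and ``re-select the starting colors'' can cycle indefinitely for the same reason. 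The phrase ``verifying that this local repair is always available \ldots\ is the crux'' is accurate, but the verification fails.

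The missing idea is the Vizing \emph{fan}, adapted to $f$-colorings (this is what \cite{schf} actually does). Fixing one endpoint $u$, one builds a maximal sequence $v=w_0,w_1,\dots,w_s$ of distinct neighbors of $u$ in which the color of $uw_i$ is free at $w_{i-1}$; this is exactly where the hypothesis that every neighbor of the chosen endpoint has a free color is consumed. Maximality then forces one of two outcomes: either the free color at $w_s$ is also free at $u$, in which case rotating the fan (shifting each $uw_i$ to the color of $uw_{i+1}$) frees that color on $uv$ and finishes; or two fan vertices $w_j,w_s$ share a free color $\beta$, in which case an $(\alpha,\beta)$-Kempe chain with $\alpha$ free at $u$ provably reaches at most one of them, and a partial fan rotation followed by the chain swap completes the coloring. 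A single alternating trail, without the fan structure organizing the neighbors of $u$, does not suffice.
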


\begin{thm} \label{unicyclic}
{Let $G$ be a connected graph. If every connected component of $G_{\Delta_f}$ is
a unicyclic graph  or a tree and $G_{\Delta_f}$ is not $2$-regular, then $G$ is $f$-Class $1$. }
\end{thm}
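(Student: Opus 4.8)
The plan is to argue by contradiction and work with a counterexample $G$ minimizing $|E(G)|$: so $G$ is connected, every component of $G_{\Delta_f}$ is unicyclic or a tree, $G_{\Delta_f}$ is not $2$-regular, and $G$ is $f$-Class $2$. First I would dispose of the degenerate case $\Delta_f(G)\le 1$ (then a single colour works), so assume $\Delta_f(G)=k\ge 2$. If $G_{\Delta_f}$ is a forest, Theorem \ref{forest} already gives that $G$ is $f$-Class $1$, so some component is genuinely unicyclic. The key structural observation is that, for components that are unicyclic graphs or trees, \emph{not $2$-regular} is equivalent to the existence of a vertex of degree at most $1$ in $G_{\Delta_f}$ (a non-cyclic component is either a single vertex or contains a leaf, while any subgraph of min-degree $\ge 2$ contains a cycle). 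So I fix $v^*\in V(G_{\Delta_f})$ with $d_{G_{\Delta_f}}(v^*)\le 1$.

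The strategy is then to delete one suitable edge $e$ incident to $v^*$, $k$-$f$-colour $G-e$, and reinsert $e$. Deleting an edge can only drop its endpoints out of the $f$-core, and deleting vertices from a unicyclic graph or a tree again yields unicyclic graphs and trees; hence $(G-e)_{\Delta_f}$ is again a disjoint union of unicyclic graphs and trees. I would choose $e$ so that $(G-e)_{\Delta_f}$ is \emph{either a forest or not $2$-regular}: since $d_G(v^*)=f(v^*)k\ge 2$ while at most one edge at $v^*$ meets the core, $v^*$ has a non-core edge; if $G_{\Delta_f}-v^*$ is not $2$-regular, delete such a non-core edge, whereas if $d_{G_{\Delta_f}}(v^*)=1$ and $G_{\Delta_f}-v^*$ is $2$-regular, delete the core edge $v^*w$, which also removes $w$ and breaks its cycle, turning the affected component into a path. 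In the first case minimality of $G$ forces $G-e$ to be $f$-Class $1$; in the second, $(G-e)_{\Delta_f}$ is a forest and Theorem \ref{forest} gives the same conclusion. Either way $G-e$ has a $k$-$f$-colouring.

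To finish I would extend this colouring across the uncolored edge $e=v^*z$ by Lemma \ref{schf}. For every neighbour $x$ of $v^*$ or $z$ that is \emph{not} $f$-maximum we have $d_G(x)\le f(x)k-1$, so with $k$ colours the pigeonhole principle yields a colour occurring at most $\lfloor (f(x)k-1)/k\rfloor=f(x)-1$ times at $x$; thus the hypothesis of Lemma \ref{schf} is automatic at all non-core neighbours. Obtaining an $f$-colouring of $G$ this way contradicts that $G$ is $f$-Class $2$, completing the argument.

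The main obstacle is exactly the $f$-maximum (``full'') neighbours of the endpoints of $e$: at such a vertex $x$ every colour may a priori appear exactly $f(x)$ times, so Lemma \ref{schf} does not apply directly. Choosing $v^*$ of core-degree $\le 1$ controls these neighbours at $v^*$, but the other endpoint $z$ may still have full neighbours, and making a colour free at them requires a Kempe-chain/Vizing-fan recolouring of $G-e$ before invoking the lemma. Equivalently, one can try to promote the whole reduction to a proof that the minimal counterexample is $f$-critical, whence Theorem \ref{2} would give $d_{G_{\Delta_f}}(v^*)\ge 2f(v^*)\ge 2$, contradicting $d_{G_{\Delta_f}}(v^*)\le 1$ at once; the delicate point there is that deleting a core edge can make $(G-e)_{\Delta_f}$ become $2$-regular, so criticality has to be established edge-by-edge. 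Carrying out this recolouring (or the criticality reduction), together with the residual degenerate configurations—bridges and an isolated core-vertex accompanied only by cycles—is where the real work lies.
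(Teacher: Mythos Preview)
Your outline has the right overall shape (delete an edge at a low-core-degree vertex, colour the smaller graph, extend via Lemma~\ref{schf}), but the gaps you flag at the end are real, and the paper closes them by three moves you do not make.

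First, the paper does \emph{not} work with a core vertex of degree~$\le 1$ in general. It begins by splitting off the case $\Delta(G_{\Delta_f})\le 2$: there Theorem~\ref{itemf} gives the result immediately, because an $f$-Class~2 graph with $\Delta(G_{\Delta_f})\le 2$ must have $G_{\Delta_f}$ $2$-regular. This single stroke disposes of your ``isolated core vertex accompanied only by cycles'' configuration, which your case split never reaches.

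Second, in the remaining case $\Delta(G_{\Delta_f})\ge 3$, a unicyclic-or-tree component with a vertex of degree~$\ge 3$ always contains an edge $uv$ with $d_{G_{\Delta_f}}(v)=1$ and $d_{G_{\Delta_f}}(u)\ge 2$, and the paper always deletes \emph{this core edge}. Your worry about full neighbours at the other endpoint then evaporates: Lemma~\ref{schf} only needs a missing colour at every neighbour of \emph{one} endpoint, and at $v$ this is automatic---every $x\in N_G(v)\setminus\{u\}$ is non-core (so has a missing colour by pigeonhole), while $u$ itself has lost an edge and hence also has a missing colour. No Kempe-chain recolouring is required. By contrast, your Case~A (deleting a non-core edge at $v^*$ when $d_{G_{\Delta_f}}(v^*)=1$) leaves the core neighbour of $v^*$ fully saturated, so Lemma~\ref{schf} genuinely fails at $v^*$ there.

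Third, the one subtle sub-case---when $H=G-e$ has $H_{\Delta_f}$ $2$-regular---is handled not by proving $G$ $f$-critical, but by observing that then $H$ is $f$-critical (Theorem~\ref{itemf}) and applying Theorem~\ref{2} to $u$ \emph{in $H$}: $u$ would need at least $2f(u)\ge 2$ neighbours in $H_{\Delta_f}$, but the unicyclic/tree structure of $G_{\Delta_f}$ forces $u$ to have only one such neighbour (the component containing $u,v$ must be a cycle with a pendant path ending at $v$, and $u$ is the penultimate vertex of that path). This is exactly the ``criticality reduction'' you allude to, but applied to $H$ rather than $G$, and only in the specific sub-case where it is needed.
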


\begin{proof}
{  First suppose that $\Delta(G_{\Delta_f})\leq 2$. Toward a contradiction,
assume that $G$ is $f$-Class $2$. By Theorem
  \ref{itemf}, $G_{\Delta_f}$ is $2$-regular, which is a contradiction. So, one may suppose that $\Delta(G_{\Delta_f})\geq
  3$.     Now, the proof is by induction on $m=|E(G_{\Delta_f})|$.
     Since $\Delta(G_{\Delta_f})\geq   3$, we have $m\ge 3$. First assume that $m=3$.  Then $G_{\Delta_f}=K_{1,3}$. Consider $H=G\setminus \{e\}$, where $e=uv\in E(G_{\Delta_f})$ and $d_{G_{\Delta_f}}(v)=1$.
 Consider the graph $H$ with function $f$.  We want to show that $H$ is $f$-Class $1$.
 If $H$ is connected, then $H_{\Delta_f}$ is the union of  two isolated vertices. Then by Theorem \ref{itemf}, $H$ is $f$-Class $1$. Now, assume that $H$ is not connected. Let $P$ and $Q$ be two connected components of $H$ such that
$u\in V(P)$ and $v\in V(Q)$. Note that $\Delta_f(P)=\Delta_f(G)$. Since $\delta(P_{\Delta_f})=0$, by Theorem \ref{itemf},  $P$ is $f$-Class $1$.
Now,  if $\Delta_f(Q)<\Delta_f(G)$, then  by Theorem \ref{vizing}, $Q$  has an $f$-coloring with colors $\{1,\ldots,\Delta_f(G)\}$.
Also, if $\Delta_f(Q)=\Delta_f(G)$, then clearly $Q_{\Delta_f}=\emptyset$ and by Theorem \ref{tohi}, $Q$ is $f$-Class $1$. Also,
  Now,
since for every $x \in N_G(v)\setminus \{u\}$, we have  $x\not \in V(G_{\Delta_f})$, there exists a color $\alpha_x$ which appears at most $f(x)-1$ times in $x$ and so
by Lemma \ref{schf}, $G$ is $f$-Class $1$ and we are done.

 Let $G$ be a graph  and $t=|E(G_{\Delta_f})|$. Assume that  the assertion holds for all graphs with $m < t$. Consider $H=G\setminus \{e\}$, where $e=uv$ is one of edges of $G_{\Delta_f}$  such that  $d_{G_{\Delta_f}}(v)=1$ and $d_{G_{\Delta_f}}(u)\ge 2$.
 Consider the graph $H$ with function $f$. We would like  to show that $H$ is $f$-Class $1$. Two cases may occur.

  First assume that $H$ is connected. If $\Delta(H_{\Delta_f})\ge 3$, then by the induction hypothesis we are done. If $\Delta(H_{\Delta_f})\le 2$ and $H_{\Delta_f}$ is not $2$-regular, then by Theorem \ref{itemf}, $H$ is $f$-Class $1$. Thus assume that $H_{\Delta_f}$ is $2$-regular. Then it is not hard to see that $G_{\Delta_f}$ is a disjoint union of some cycles and the graph shown in the following figure:

  \includegraphics{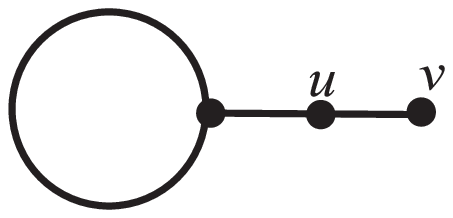}\vspace{2cm} $$\textmd{Figure 2: A part of $G_{\Delta_f}$} $$
Now, by Theorem \ref{itemf}, $H$ is $f$-critical and so by Theorem \ref{2}, $u$ should have at least two neighbors in $H_{\Delta_f}$,  a contradiction.

Next assume that $H$ is not connected. Let $P$ and $Q$ be two connected components of $H$ such that
$u\in V(P)$ and $v\in V(Q)$.
Clearly, $\Delta_f(P)=\Delta_f(G)$. If $\Delta(P_{\Delta_f})\ge 3$, then by the induction hypothesis, $P$ is $f$-Class $1$. If $\Delta(P_{\Delta_f})\le 2$ and $P_{\Delta_f}$ is not $2$-regular, then by Theorem \ref{itemf}, $P$ is $f$-Class $1$. Thus assume that $P_{\Delta_f}$ is $2$-regular. Then it is not hard to see that $G_{\Delta_f}$ is the disjoint union of some unicycles, trees  and the graph shown in the Figure $2$.
Now, by Theorem \ref{itemf}, $P$ is $f$-critical and so by Theorem \ref{2}, $u$ should have at least two neighbors in $P_{\Delta_f}$,  a contradiction and $P$ is $f$-Class $1$. Now, if $\Delta_f(Q)<\Delta_f(G)$, then  by Theorem \ref{vizing}, $Q$  has an $f$-coloring with colors $\{1,\ldots,\Delta_f(G)\}$.
So, assume that  $\Delta_f(Q)=\Delta_f(G)$. Now, if $Q_{\Delta_f}=\emptyset$, then  by Theorem \ref{tohi}, $Q$ is $f$-Class $1$. Otherwise, similar to the argument about $P$, $Q$ is $f$-Class $1$.  Now,
since for every $x \in N_G(v)\setminus \{u\}$, we have  $x\not \in V(G_{\Delta_f})$, there exists a color $\alpha_x$ which appears at most $f(x)-1$ times in $x$ and so
by Lemma \ref{schf}, $G$ is $f$-Class $1$ and we are done.
}
\end{proof}

\begin{thm}\label{clawf1}
{Let $G$ be a connected claw-free graph with $\Delta(G_{\Delta_f})\le 2$.
If there exists a vertex $v\in V(G)$ such that $f(v)\neq 1$ and $G\neq W$, where $W$ is the graph shown in the following figure, then $G$ is $f$-Class $1$.
}
\end{thm}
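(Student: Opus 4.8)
The plan is to argue by contradiction, extracting from the $f$-Class $2$ assumption a rigid structure and then eliminating every configuration except $W$. Suppose $G$ is $f$-Class $2$. Since $\Delta(G_{\Delta_f})\le 2$, Theorem~\ref{itemf} applies: $G$ is $f$-critical, $G_{\Delta_f}$ is $2$-regular (hence a disjoint union of cycles), and $d_G(w)=f(w)\Delta_f(G)-1$ for every $w\notin V(G_{\Delta_f})$. Applying Theorem~\ref{2} to a core vertex $x$, it must have at least $2f(x)$ neighbours in $G_{\Delta_f}$; but $G_{\Delta_f}$ is $2$-regular, so $x$ has exactly two, forcing $f(x)=1$ for every $x\in V(G_{\Delta_f})$. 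Consequently the hypothesised vertex with $f$-value $\neq 1$, call it $u$, lies outside the $f$-core and satisfies $f(u)\ge 2$. Feeding $d_G(u)=f(u)\Delta_f(G)-1\ge 2f(u)$ (again Theorem~\ref{2}) gives $f(u)(\Delta_f(G)-2)\ge 1$, so $\Delta_f(G)\ge 3$.

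Next I would exploit claw-freeness to pin down the neighbourhood of $u$. Let $S=N_G(u)\cap V(G_{\Delta_f})$; by Theorem~\ref{2}, $|S|\ge 2f(u)\ge 4$. Since $G$ is claw-free, $\langle S\rangle$ (an induced subgraph of the $2$-regular graph $G_{\Delta_f}$, hence a disjoint union of paths and cycles) has independence number at most $2$. Such a union with $\alpha\le 2$ has at most two components; if it is connected it is a path on at most four or a cycle on at most five vertices, and if it has two components each has independence number $1$, i.e. is $K_1$, $K_2$, or $C_3$. In particular $|S|\le 6$, the extremal shapes being $C_5$ and $C_3\sqcup C_3$. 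Combined with $|S|\ge 2f(u)$ this already bounds $f(u)\le 3$ and confines $\langle S\rangle$ to the explicit list $P_4,\,C_4,\,C_5,\,C_3\sqcup K_1,\,C_3\sqcup K_2,\,C_3\sqcup C_3,\,K_2\sqcup K_2$.

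The endgame is a case analysis over these shapes. The guiding principle: a component of $\langle S\rangle$ that is a \emph{full} core-cycle (say a $C_4$ or $C_5$ all of whose vertices are joined to $u$) consists of vertices whose entire neighbourhood lies in that cycle together with $u$, so such a block is attached to the rest of $G$ only through $u$. Tracking these attachments produces, in most cases, a cut edge or a small star/matching edge cut of size at most $\Delta_f(G)-2$, whereupon Theorem~\ref{fcut} (or Theorem~\ref{fcutmatching}) contradicts $f$-Class $2$. The decisive surviving case is $\langle S\rangle=C_5$ with $f(u)=2$ and $\Delta_f(G)=3$: here each rim vertex has degree $3$, using two edges on the cycle and one to $u$, while $u$ has degree $5$ spent entirely on the cycle, so connectedness forces $G$ to be exactly the $5$-wheel, which is $W$. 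A short counting argument then confirms $W$ is genuinely $f$-Class $2$: in any proper $3$-edge-colouring of $C_5$ one colour class is a single edge, so the corresponding colour is missing at three rim vertices, forcing their three spokes to share that colour at the hub and violating $f(\text{hub})=2$. Thus $W$ is the unique exception and every other shape must be discarded.

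The main obstacle is the small-$\Delta_f$ regime, precisely $\Delta_f(G)=3$, where the bound $\Delta_f(G)-2=1$ degenerates and Theorem~\ref{fcut} only rules out bridges. The stubborn configurations are those containing a triangle component ($C_3\sqcup K_1$, $C_3\sqcup K_2$, $C_3\sqcup C_3$): isolating such a triangle requires cutting all three of its edges to $u$, a star of size $3>1$, so no cut of admissible size exists. Here I would instead push claw-freeness one step further — for instance, a core vertex $d\in S$ isolated in $\langle S\rangle$ has neighbourhood $\{p,q,u\}$ with $u$ non-adjacent to the core vertices $p,q$, forcing $p\sim q$ and hence $d$ onto its own triangle — and combine this with a direct $\Delta_f(G)$-colouring: a proper colouring of each core triangle forces the three spokes at $u$ to receive distinct colours, leaving room at $u$ (since $f(u)=2$), and Lemma~\ref{schf} extends the colouring at the non-core neighbours, which always retain a free colour. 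Promoting this local flexibility to a global colouring, and verifying that no triangle configuration other than $W$ survives, is the technical heart of the argument.
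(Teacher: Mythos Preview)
Your setup is correct and matches the paper: assuming $G$ is $f$-Class~$2$, you correctly deduce that $G$ is $f$-critical, $G_{\Delta_f}$ is $2$-regular, $f\equiv 1$ on the core, $\Delta_f(G)\ge 3$, and the neighbourhood $S=N_G(u)\cap V(G_{\Delta_f})$ of a vertex $u$ with $f(u)\ge 2$ satisfies $4\le |S|\le 6$ with $\alpha(\langle S\rangle)\le 2$. Your enumeration of possible shapes for $\langle S\rangle$ is also correct.

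The gap is in the endgame. Your central structural claim --- that a component of $\langle S\rangle$ which is a full core cycle has all its vertices' neighbourhoods contained in that cycle together with $u$ --- is \emph{false} unless $\Delta_f(G)=3$. A core vertex has degree $\Delta_f(G)$; if it lies on a cycle in $\langle S\rangle$ it spends two edges on the cycle and one on $u$, leaving $\Delta_f(G)-3$ further neighbours outside. So for $\Delta_f(G)\ge 4$ the block is \emph{not} attached to the rest of $G$ only through $u$, and no small edge cut materialises. Your edge-cut strategy therefore covers essentially nothing beyond the single case $\langle S\rangle=C_5$, $\Delta_f(G)=3$ (which yields $W$) and $\langle S\rangle=C_4$, $\Delta_f(G)=3$ (which yields a bridge). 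Every other configuration --- in particular all of $\Delta_f(G)\ge 4$ --- remains untreated, and your fallback via Lemma~\ref{schf} is not a method for building a global $\Delta_f(G)$-colouring from scratch; it extends an existing colouring across a single edge whose endpoint has only non-core neighbours.

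The paper handles the missing cases by entirely different techniques. A counting argument (comparing $|N_H(u)|$ against the residual degree of two nonadjacent vertices in $S$) disposes of $|S|=4$ uniformly and also rules out $f(u)=3$. For $|S|\in\{5,6\}$ and $\Delta_f(G)$ large, the paper \emph{splits} $u$ into two new vertices $v_1,v_2$ with $f'(v_1)=f'(v_2)=1$, arranging that exactly one of them becomes an $f'$-maximum vertex of degree~$1$ in the new $f'$-core; Theorem~\ref{unicyclic} then gives an $f'$-colouring which is reassembled at $u$. For the residual small values $\Delta_f(G)\in\{3,4,5\}$ the paper carries out a lengthy explicit analysis, including a delicate $K_6$-decomposition when $|S|=6$ and $\Delta_f(G)=5$. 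None of these mechanisms appear in your outline, and they are not replaceable by edge-cut considerations.
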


 \vspace{10cm}
\includegraphics{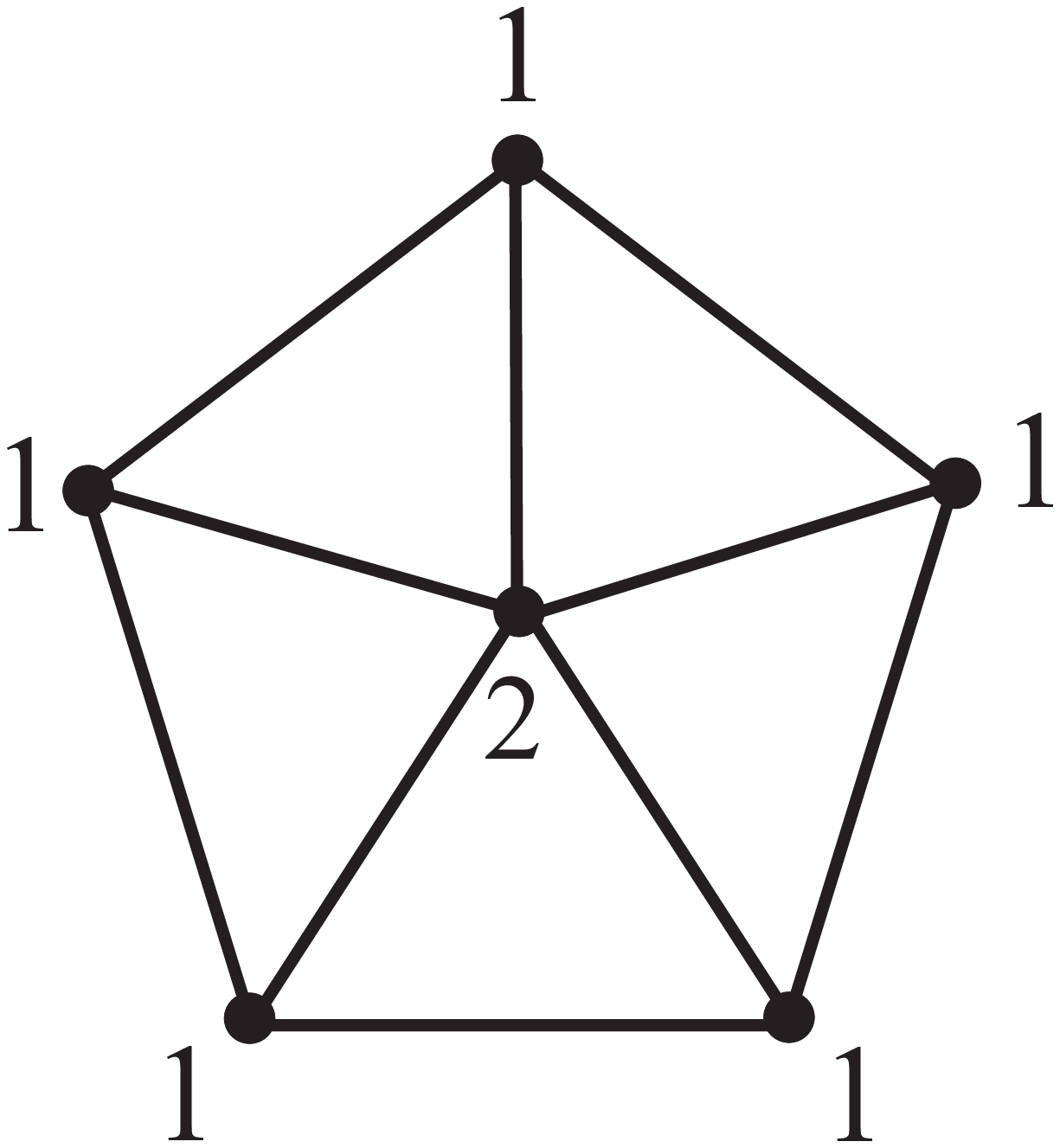} \vspace{30cm} $$\textmd {Figure\,\,$3$: The graph $W$ (The value of each vertex $z$ denotes $f(z)$)}$$

\begin{proof}
{To the contrary assume that $G$ is $f$-Class $2$.
Then by Theorem \ref{itemf}, $G$ is $f$-critical and $G_{\Delta_f}$ is $2$-regular. Now, by Theorem \ref{2}, $f(u)=1$, for every $u \in V(G_{\Delta_f})$ and so by the definition we have
\begin{equation}\label{uf}
\text {$d_G(u)=\Delta_f(G)$, for every $u \in V(G_{\Delta_f})$.}
\end{equation}
Note that, if $\Delta_f(G)=2$, then since $G$ is connected and $G_{\Delta_f}$ is $2$-regular, $G=G_{\Delta_f}$ and there is no vertex $v$ with $f(v)\neq 1$.
Thus we can assume that
\begin{equation}\label{deltage3}
\Delta_f(G)\ge 3.
\end{equation}
Let $H=G \setminus G_{\Delta_f}$. Now, if $|N_{G_{\Delta_f}}(x)|\ge 7$, for some $x\in V(H)$, then clearly there exists an independent set of size $3$ in $N_{G_{\Delta_f}}(x)$ which implies that $G$ has a claw, a contradiction. Thus we have
\begin{equation}\label{6}
 \text { $|N_{G_{\Delta_f}}(x)|\leq 6$, for every $x\in V(H)$.}
\end{equation}

Now, to prove the theorem, first we need the following claim:

\noindent{$\mathbf{Claim\,\, 1.}$} $f(z)\le 2$, for every $z\in V(G)$.

 \noindent{$\mathbf{Proof\,\, of\,\, Claim\,\, 1.}$} To see this by the contrary, assume that there exists a vertex $z\in V(G)$ such that $f(z)\ge 3$.  Clearly, $z\in V(H)$.
Now, by (\ref{6}) and Theorem \ref{2}, we conclude that $|N_{G_{\Delta_f}}(z)|=6$ and $f(z)=3$. Then by Theorem \ref{itemf}, $d_G(z)=3\Delta_f(G)-1$ and so
$d_{H}(z)=3\Delta_f(G)-7$. Now, we want to show that for every $w\in N_{H}(z)$, $|N_{G_{\Delta_f}}(z)\cap N_{G_{\Delta_f}}(w)|\ge 3$.
Because otherwise, there are at least $4$ vertices, say $u_1,u_2,u_3,u_4\in N_{G_{\Delta_f}}(z)$, such that $wu_i \not \in E(G)$, for $i=1,\ldots,4$. Now, since $G_{\Delta_f}$ is $2$-regular, with no loss of generality, we can assume that $u_1 u_2 \not \in E(G)$. Then $\langle u_1,u_2,w,z\rangle$ is a claw, a contradiction. Thus, we conclude that for every $w\in N_{H}(z)$, $|N_{G_{\Delta_f}}(w)\cap N_{G_{\Delta_f}}(z)|\ge 3$. This implies that
$$3(3\Delta_f(G)-7)\le e_G( N_{G_{\Delta_f}}(z),N_{H}(z))\le 6(\Delta_f(G)-3),$$
which yields that $\Delta_f(G)\le 1$, a contradiction and the claim is proved.

Now, by the assumption of theorem and Claim $1$, we can assume that there exists a vertex $v\in V(H)$ such that $f(v)=2$. Then, by Theorem \ref{itemf}, $d_G(v)=2\Delta_f(G)-1$.
Now, by Theorem \ref{2} and using  (\ref{6}), we have $4\le|N_{G_{\Delta_f}}(v)| \le 6$. Thus, three cases may occur:

$\mathbf{Case\,\, 1.}$ $|N_{G_{\Delta_f}}(v)|=4$.\\
 Let  $N_{G_{\Delta_f}}(v)=\{u_1,\ldots,u_4\}$ and $N_H(v)=\{w_1, \ldots,w_{2\Delta_f(G)-5}\}$. Since $G_{\Delta_f}$ is $2$-regular, with no loss of generality, there are two non-adjacent vertices $u_1,u_2\in N_{G_{\Delta_f}}(v)$. Since $G$ is claw-free, $u_1w_i\in E(G)$ or $u_2 w_i\in E(G)$, for $i=1,\ldots, 2\Delta_f(G)-5$. Thus,
$$2\Delta_f(G)-5\le e_G(N_H(v), \{ u_1,u_2\})\le 2(\Delta_f(G)-3),$$
 a contradiction.

$\mathbf{Case\,\, 2.}$  $|N_{G_{\Delta_f}}(v)|=5$.\\
Let $N_{G_{\Delta_f}}(v)=\{u_1,\ldots,u_5\}$ and $N_H(v)=\{w_1, \ldots,w_{2\Delta_f(G)-6}\}$.
 First note that  since $G$ is claw-free, $N_{G_{\Delta_f}}(v)$ does not contain an independent set of size $3$ and so it can be easily checked that $\langle N_{G_{\Delta_{f}}}(v)\rangle $ is one of two following graphs:

\vspace{2cm}
 \includegraphics{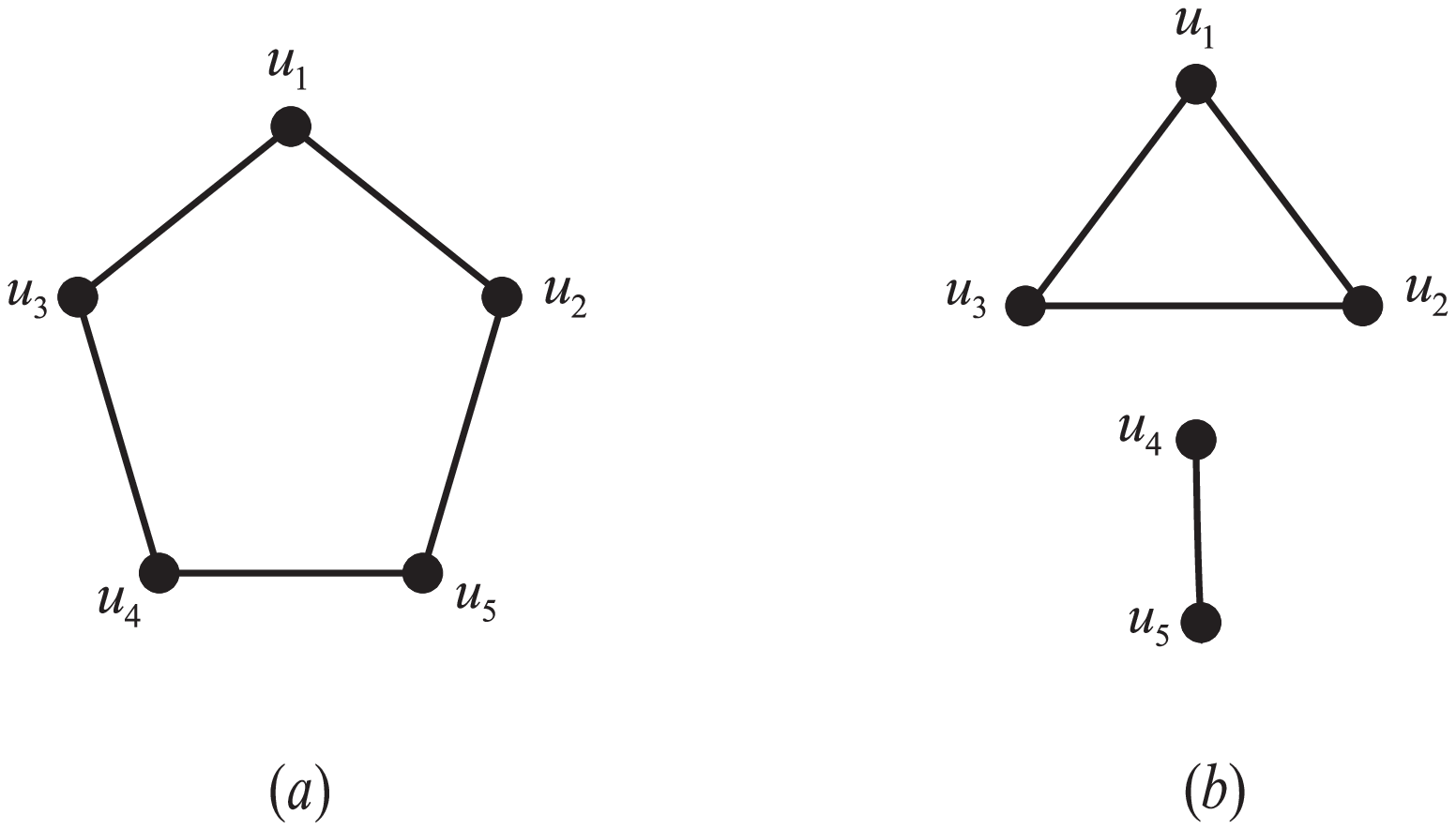} \vspace{2.5cm} $$\textmd{Figure\,\,$4$:  $\langle N_{G_{\Delta_{f}}}(v)\rangle $ when  $|\langle N_{G_{\Delta_{f}}}(v)\rangle|=5 $}$$
Three subcases may occur:

 $(i)$ $\Delta_f(G)=3$.\\
We have $d_G(v)=2\Delta_f(G)-1=5$. Now, if $\langle N_{G_{\Delta_{f}}}(v)\rangle=C_{5}$, then $G$ is the graph shown in Figure $3$, a contradiction.  Thus, assume that $\langle N_{G_{\Delta_{f}}}(v)\rangle$ is the graph shown in Figure $4(b)$. By Theorem \ref{itemf}, since $G_{\Delta_f}$ is $2$-regular, there exists  $u_{6}\in
N_{G_{\Delta_{f}}}(u_{5})\backslash \{u_{4} \}$ and $u_{7}\in N_{G_{\Delta_{f}}}(u_4)\backslash \{u_{5} \}$.  Now, we divide the proof of this subcase into two parts:

$\bullet$ $u_{6}\neq u_{7}$. Let $L=G\backslash \{v,u_{1},\ldots, u_{5}\}$. Now, add a new vertex $x$ to $L$ and join $x$ to $u_{6}$ and $u_{7}$.
Call the resultant graph $L'$.
 Let $f':V(L')\longrightarrow \mathbb{N}$ be a function defined by
     \begin{center}
$f'(z)=
\begin{cases}
   f(z)      &z \in V(L),\\
      1       & z=x.\
\end{cases}$
\end{center}
Clearly, $L'$ is a connected graph with $\Delta_{f'}(L')=\Delta_{f}(G)=3$. Note that since $d_{L'}(x)=2$, we have  $x\not\in V(L'_{\Delta_{f'}})$ and so $\delta(L'_{\Delta_{f'}})= 1$. Now, since $\Delta(L'_{\Delta_{f'}})\le  2$ and $L'_{\Delta_{f'}}$ is not $2$-regular,  by Theorem \ref{itemf}, $L'$ has an $f'$-coloring call $\theta$, with colors $\{1, 2, 3\}$. With no loss of generality, assume that $\theta(xu_{7})=1$ and $\theta(xu_{6})=2$. Now, define an $f$-coloring $c:E(G)\rightarrow \{1, 2, 3\}$ as follows.
\\Define $c(e)=\theta(e)$, for every $e\in E(L)$ and
\begin{center}
$\begin{cases}
c(u_{4}u_{7})=c(vu_{1})=c(vu_{5})=c(u_{2}u_{3})=1\\
  c(u_{5}u_{6})=c(vu_{4})=c(vu_{3})=c(u_{1}u_{2})=2\\
   c(u_{4}u_{5})=c(vu_{2})=c(u_{1}u_{3})=3.
   \end{cases}$
\end{center}

$\bullet$ $u_{6}=u_{7}$. Since $\Delta_{f}(G)=3$, $u_{6}$ has a neighbor $t$, where $t\not\in\{v,u_{1},\ldots,u_{5}\}$. Clearly, $tu_{6}$ is a cut edge for $G$ and by Theorem \ref{fcut}, $G$ is $f$-class $1$, a contradiction.

 $(ii)$ $\Delta_f(G)=4$.\\
 Clearly, $d_G(v)=2\Delta_f(G)-1=7$ and  $N_H(v)=\{w_1,w_2\}$.  Now, we divide the proof of this subcase into two parts:

$\bullet$ $\langle N_{G_{\Delta_{f}}}(v)\rangle$ is the graph shown in Figure $4(a)$.\\
 Since $G$ is claw-free, noting that $u_1u_4\not \in E(G)$, we have $u_1w_1\in E(G)$ or $u_4w_1\in E(G)$. With no loss of generality assume that $u_1w_1\in E(G)$.
  Moreover, since $\langle v,u_1,u_4,w_2\rangle$ is not a  claw and $N_G(u_1)=\{v,u_2,u_3,w_1\}$, we have $u_4w_2\in E(G)$. Similarly,
  since $\langle v,u_1,u_5,w_2\rangle$ is not a  claw and $N_G(u_1)=\{v,u_2,u_3,w_1\}$, we conclude that $u_5w_2\in E(G)$.
  Also, since $\langle v,u_2,u_4,w_1\rangle$ is not a  claw and $N_G(u_4)=\{v,u_3,u_5,w_2\}$, we obtain that $u_2w_1\in E(G)$.
  Moreover, since $\langle v,u_3,u_5,w_1\rangle$ is not a  claw and $N_G(u_5)=\{v,u_2,u_4,w_2\}$, $u_3w_1\in E(G)$.
  Now, clearly $\langle v,u_2,u_3,w_2\rangle$ is a  claw which is a contradiction.

$\bullet$  $\langle N_{G_{\Delta_{f}}}(v)\rangle$ is the graph shown in Figure $4(b)$.\\
 Similar to the previous argument, we can assume that $\{u_1w_1,u_2w_1,u_3w_1,u_4w_2,u_5w_2\}\subseteq E(G)$. Now, since $d_G(w_1)\ge 4$, $f(w_1)\ge 2$. Now, by Claim $1$ we  conclude that $f(w_1)=2$ and so by Theorem \ref{itemf}, $d_G(w_1)=7$. Assume that
 $N_G(w_1)=\{v,v_1,v_2,v_3,u_1,u_2,u_3\}$.
 Now, by Theorem \ref{2}, we have $|N_{G_{\Delta_f}}(w_1)|\ge  4$. If $|N_{G_{\Delta_f}}(w_1)|= 4$, then by Case $1$, we are done. So, we can assume that $|N_{G_{\Delta_f}}(w_1)|\ge 5$. Thus we have
 \begin{equation}\label{v1v2gdeltaf}
 v_1,v_2\in V(G_{\Delta_f}).
\end{equation}
 Also, since $\langle u_1,v_i,v_j,w_1\rangle$ is not a claw, for $i,j=1,2,3$ and $N_G(u_1)=\{v,u_2,u_3,w_1\}$, we obtain that
  \begin{equation}\label{vivjadjacent}
 \text{$\langle v_1,v_2,v_3\rangle=k_3,$}
\end{equation}
 Now, we show that
 \begin{equation}\label{w1notadjw2}
 v_3 \neq w_2
\end{equation}
 To the contrary assume that $v_3=w_2$. Then $d_G(w_2)\ge 6$ and since $w_2\not \in V(G_{\Delta_f})$, we have $f(w_2)=2$. Let $N_G(w_2)=\{v,v_1,v_2,u_4,u_5,w_1,y\}$, where $y \not \in \{u_1,u_2,u_3\}$.
 Now, since $\langle u_1,u_4,w_2,y\rangle$ and $\langle u_1,u_5,w_2,y\rangle$ are not claws, we conclude that $\langle u_4,u_5,y \rangle$ is a $K_3$ in $G_{\Delta_f}$ and so $yv_1\not \in E(G)$. Then $\langle v,v_1,w_2,y \rangle$ is a claw, a contradiction and (\ref{w1notadjw2}) holds.

 Now, consider $L=G\setminus \{v,u_1,u_2,u_3,w_1\}$. Add a new vertex $x$ to $L$ and join $x$ to $u_5,w_2,v_2,v_3$.
Call the resultant graph $L'$.
 Define $f':V(L')\longrightarrow \mathbb{N}$ be a function defined by
     \begin{center}
$f'(z)=
\begin{cases}
   f(z)      &z \in V(L),\\
      1       & z=x.\
\end{cases}$
\end{center}
  Clearly by (\ref{vivjadjacent}), $L'$ is connected and $v_1,u_4\not \in V(L'_{\Delta_{f'}})$.  Now, if $v_3 \not\in V(G_{\Delta_f})$, then clearly $\delta(L'_{\Delta_{f'}})=1$ and $\Delta(L'_{\Delta_{f'}})\le 2$ and by Theorem \ref{itemf}, $L'$ is $f'$-Class $1$. So, assume that $v_3 \in V(G_{\Delta_f})$.  Clearly $L'_{\Delta_{f'}}$ is not $2$-regular and each of the components is a unicyclic graph or a tree. Now,  by Theorem \ref{unicyclic}, $L'$ has an $f'$-coloring call $\theta$, with colors $\{1, 2, 3,4\}$. With no loss of generality, assume that $\theta(xu_5)=1$,
   $\theta(xw_2)=2$, $\theta(xv_3)=3$ and  $\theta(xv_2)=4$. Now, define an $f$-coloring $c:E(G)\rightarrow \{1, 2, 3,4\}$ as follows.

   Let $c(e)=\theta(e)$, for every $e\in E(L)$, $c(vu_5)=1, c(vw_2)=2, c(v_3w_1)=3, c(v_2w_1)=4$ and $c(vu_4)=a$, $c( v_1w_1)=b$, where $a$ and $b$ are the colors missed in coloring $\theta$ in $u_4$ and $v_1$, respectively. Let
   $\langle v,u_1,u_2,u_3,w_1\rangle$.
Now, by a suitable $f$-coloring of $\langle v,u_1,u_2,u_3,w_1\rangle$, we extend the $f'$-coloring $\theta$ of $L'$ to an $f$-coloring $c$ of $G$.
With  no loss of generality, one of the following cases may occur:

 \includegraphics{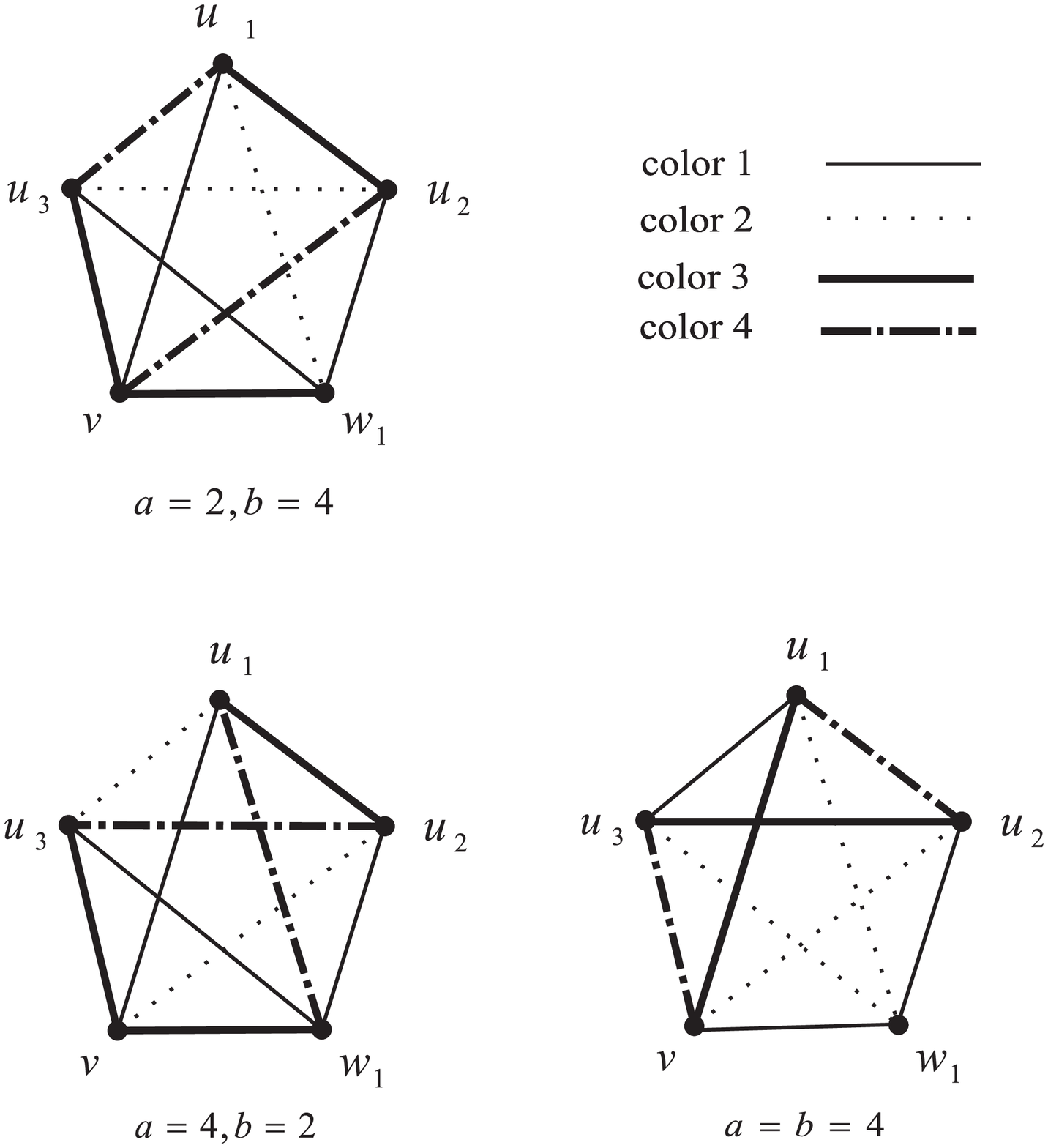} \vspace{10cm} $$\textmd{Figure\,\,$5$: $4$-edge coloring of $\langle v,u_1,u_2,u_3,w_1\rangle$}$$

%
%
%

 $\bullet$ $a=4$, $b=3$. It is easy to see that one of coloring of graphs shown in Figure $5$ works in this case.

$(iii)$ $\Delta_f(G)\ge 5$.\\
Consider $G\setminus \{v\}$. Now, add two new vertices $v_1$ and $v_2$ to $G\setminus \{v\}$, join $v_1$  to $\{u_1,w_1,\ldots,w_{\Delta_f(G)-1}\}$ and $v_2$ to $\{u_2,\ldots,u_5,w_{\Delta_f(G)},\ldots,w_{2\Delta_f(G)-6}\}$.
Call the resultant graph by $L$.
 Let $f':V(L)\longrightarrow \mathbb{N}$ be a function defined by
\begin{center}
$f'(z)=
\begin{cases}
   f(z)      &z \in V(G)\setminus \{v\},\\
      1       & z\in \{v_1,v_2\}.\
\end{cases}$
\end{center}
 It is easy to see that  $L$ is connected, $\Delta_{f'}(L)=\Delta_f(G)$ and $V(L_{\Delta_{f'}})=V(G_{\Delta_f})\cup \{v_1\}$.
 Noting that $|N_{L_{\Delta_{f'}}}(v_1)|=1$ and using Theorem \ref{unicyclic}, $L$ has an $f'$-coloring with colors $\{1,\ldots,\Delta_{f'}(L)\}$, call $\theta$.

Now, define an $f$-coloring $c: E(G)\longrightarrow \{1, \ldots, \Delta_{f}(G)\} $ as follows. Let
\begin{center}
$\begin{cases}
c(e)= \theta(e)      &{\rm for\,\,every}\,\,\,e\in E(G\setminus \{v\})\\
c(vu_1)=\theta(u_1v_1)  &{}\\
c(vu_i)=\theta(u_iv_2)      &{\rm for}\,\,\,i=2,\ldots,5\\
  c(vw_i)=\theta(v_1w_i)      &{\rm for}\,\,\,i=1,\ldots,\Delta_{f}(G)-1\\
   c(vw_i)=\theta(v_2w_i)     & {\rm for}\,\,\,i=\Delta_{f}(G),\ldots,2\Delta_{f}(G)-6.
   \end{cases}$
\end{center}
This implies that $G$ is $f$-Class $1$, a contradiction.

$\mathbf{Case\,\, 3.}$  $|N_{G_{\Delta_f}}(v)|=6$.\\
Let $N_{G_{\Delta_f}}(v)=\{u_1,\ldots,u_6\}$ and  $N_H(v)=\{w_1, \ldots,w_{2\Delta_f(G)-7}\}$. Since $G$ is claw-free, every induced subgraph of order $3$ of $\langle N_{G_{\Delta_f}}(v) \rangle$ has at least one edge. Thus  $\langle N_{G_{\Delta_f}}(v)\rangle$ is disjoint union of two $K_3$.
With no loss of generality, assume that
\begin{equation}\label{uk31}
\text {$\langle u_1,u_2,u_3\rangle \simeq \langle u_4,u_5,u_6\rangle \simeq K_3$.}
\end{equation}
 Thus, one can assume that
\begin{equation}\label{vtwok3}
\text {for every vertex $x$ with $f(x)=2$, $\langle N_{G_{\Delta_f}}(x)\rangle$ is the disjoint union of two $K_3$.}
\end{equation}
Clearly, since $d_{G}(v)=2\Delta_{f}(G)-1\geq 6$, we conclude that $\Delta_f(G)\geq 4$. Now, three cases may be considered:

$(i)$  $\Delta_f(G)=4$.\\
Clearly, $d_G(v)=2\Delta_f(G)-1=7$ and  $N_H(v)=\{w_1\}$. We claim that $|N_{G_{\Delta_f}}(v)\cap N_{G_{\Delta_f}}(w_1)|\geq 3$. Because otherwise,  $w_1u_{i_j} \not\in E(G)$, for $j=1,\ldots,4$, where $u_{i_j}\in N_{G_{\Delta_f}}(v)$. By (\ref{vtwok3}) and with no loss of generality, we can assume that $u_{i_1}u_{i_2}\not\in E(G)$. Then $\langle v, w_1, u_{i_1}, u_{i_2}\rangle$ is a claw, a contradiction.  Now, we divide the proof of this subcase into two parts:

$\bullet$ $|N_{G_{\Delta_f}}(v)\cap N_{G}(w_1)|\geq 4$. Then, $d_G(w_1)\ge 5$ and since $\Delta_f(G)=4$, we conclude that $f(w_1)\geq 2$ and by Claim $1$ we find that $f(w_1)=2$. Now, using (\ref{vtwok3}), $\langle N_{G_{\Delta_f}}(w_1)\rangle$ is disjoint union of two $K_3$. Since $|N_{G_{\Delta_f}}(w_1)\cap \{u_1,u_2,u_3\}|\geq 1$ and $| N_{G_{\Delta_f}}(w_1)\cap \{u_4,u_5,u_6\}|\geq 1$, we conclude that $ N_{G_{\Delta_f}}(w_1)=\{u_1,\ldots,u_6\}$. Then, it is easy to see that $G$ is the graph shown in the following figure
 which is colored with $\Delta_f(G)=4$ colors and the proof of this subcase is complete.\\
 \includegraphics{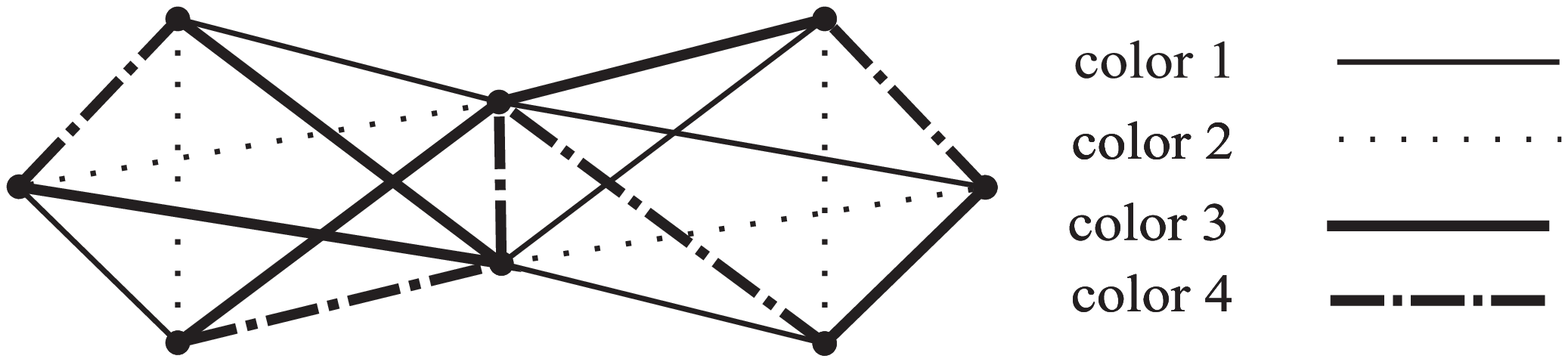} \vspace{3cm} $$\textmd{Figure\,\,$6$: An $f$-coloring of $G$ with $4$ colors }$$

$\bullet$ $|N_{G_{\Delta_f}}(v)\cap N_{G}(w_1)|=3$. Since $d_G(w_1)\geq 4$ and $w_1\not\in V(G_{\Delta_f})$, $f(w_1)=2$. Using (\ref{vtwok3}) and with no loss of generality we can assume that $ N_{G_{\Delta_f}}(w_1)\cap N_{G_{\Delta_f}}(v)= \{u_1,u_2,u_3\}$ and there are three vertices call $x_1,x_2,x_3\in N_{G_{\Delta_f}}(w_1)$ such that $\langle x_1,x_2,x_3\rangle \simeq K_3$. Consider $L=G\setminus  \{u_1,u_2,u_3,vw_1\}$. Let $f':V(L)\longrightarrow \mathbb{N}$ be a function defined by
 $f'(z)=f(z)$, for every $z \in V(L)$.
  Now, we want to prove the following claim which introduces a coloring of $L$ with some properties.

  \noindent{$\mathbf{Claim\,\, 2.}$} $L$ has an $f'$-coloring $c$ with four colors $\{1,2,3,4\}$ such that $$|\{c(w_1x_1),c(w_1x_2),c(w_1x_3),c(vu_4),c(vu_5),c(vu_6)\}|=4.$$

\noindent {$\mathbf{Proof\,\, of\,\, Claim\,\, 2.}$}  We consider two cases.

 First assume that $L$ is not connected. So, $L$ has two connected components, one of them containing $v$ and another containing $w_1$.
   It is easy to see that for every connected component $I$ of $L$, $\Delta_{f'}(I)=\Delta_{f'}(L)=\Delta_f(G)$ and so  $\Delta(I_{\Delta_{f'}})= 2$. Now, since $f'(v)=f'(w_1)=2$ and $d_{L}(v)=d_{L}(w_1)=3$, by Theorem \ref{itemf}, every component of $L$ is $f'$-class $1$.
Moreover, noting that $f'(v)=f'(w_1)=2$,  we obtain that there are at least two distinct colors appeared in the edges incident with $v$ and also with $w_1$.
   Now, by a suitable permutation of colors on these edges in one of components, Claim $2$ is proved.

 Now, assume that $L$ is connected. Consider $K=L\setminus \{w_1,x_1x_2,x_2x_3,x_1x_3\}$. Let $f'':V(K)\longrightarrow \mathbb{N}$ be a function defined by
\begin{center}
$f''(z)=
\begin{cases}
   f'(z)      &z \in V(L)\setminus \{w_1\},\\
      1       & z=v.\
\end{cases}$
\end{center}
 We want to show that $K$ is $f''$-Class $1$. It is not hard to see that every connected component of $K$ has at least one of the three vertices $\{x_1,x_2,x_3\}$.
  Let $J$ be a connected component of $K$. If $\Delta_{f''}(J)<\Delta_{f''}(K)$, then by Theorem \ref{vizing}, $J$ has an $f''$-coloring with $4$ colors.
  So, assume that  $\Delta_{f''}(J)=\Delta_{f''}(K)=4$. Now, since there exists $x_{i}\in V(J)$, for some $i\in \{1,2,3\}$ and noting that $d_J(x_{i})=1$, by Theorem \ref{itemf}, $J$ is $f''$-Class $1$ and so $K$ has an $f''$-coloring with $4$ colors $\{1,2,3,4\}$, call $\theta$. Let $N_{K}(x_1)=\{y_1\}$, $N_{K}(x_2)=\{y_2\}$ and $N_{K}(x_3)=\{y_3\}$. We want to show that
   \begin{equation}\label{9}
\text {$|\{\theta(x_1y_1),\theta(x_2y_2),\theta(x_3y_3)\}|\geq 2$.}
\end{equation}
Because otherwise, we have  $|\{\theta(x_1y_1),\theta(x_2y_2),\theta(x_3y_3)\}|=1$. Now, since for every vertex $u\in V(G)$, $f(u)\leq 2$, we conclude that  $|\{y_1,y_2,y_3\}|\geq 2$. With no loss of generality, one can suppose that $y_1$ is not adjacent to $x_2$ and $x_3$. Using (\ref{vtwok3}), we find that $f(y_1)=1$ and so $f''(y_1)=1$. Thus since $d_{K}(y_1)=\Delta_{f''}(K)-1=3$,  there is a missed color call $\alpha$ in $y_1$ different from $\theta(x_1y_1)$. One can replace $\theta(x_1y_1)$ by $\alpha$. Thus (\ref{9}) holds and  we are done.

  Now, with no loss of generality
  and noting that $f''(v)=1$, one can assume that $\theta(vu_4)=1$, $\theta(vu_5)=2$, $\theta(vu_6)=3$, $\theta(x_1y_1)=\alpha$, $\theta(x_2y_2)=\beta$ and $\theta(x_3y_3)=\gamma$.
  Now, to prove Claim $2$, it suffices to extend the $f''$-coloring of $K$  to an $f'$-coloring of $L$. To see this, in the following figures, we introduce such a suitable coloring for $\langle w_1,x_1,x_2,x_3\rangle\cup \{x_1y_1,x_2y_2,x_3y_3\}$.

 \includegraphics{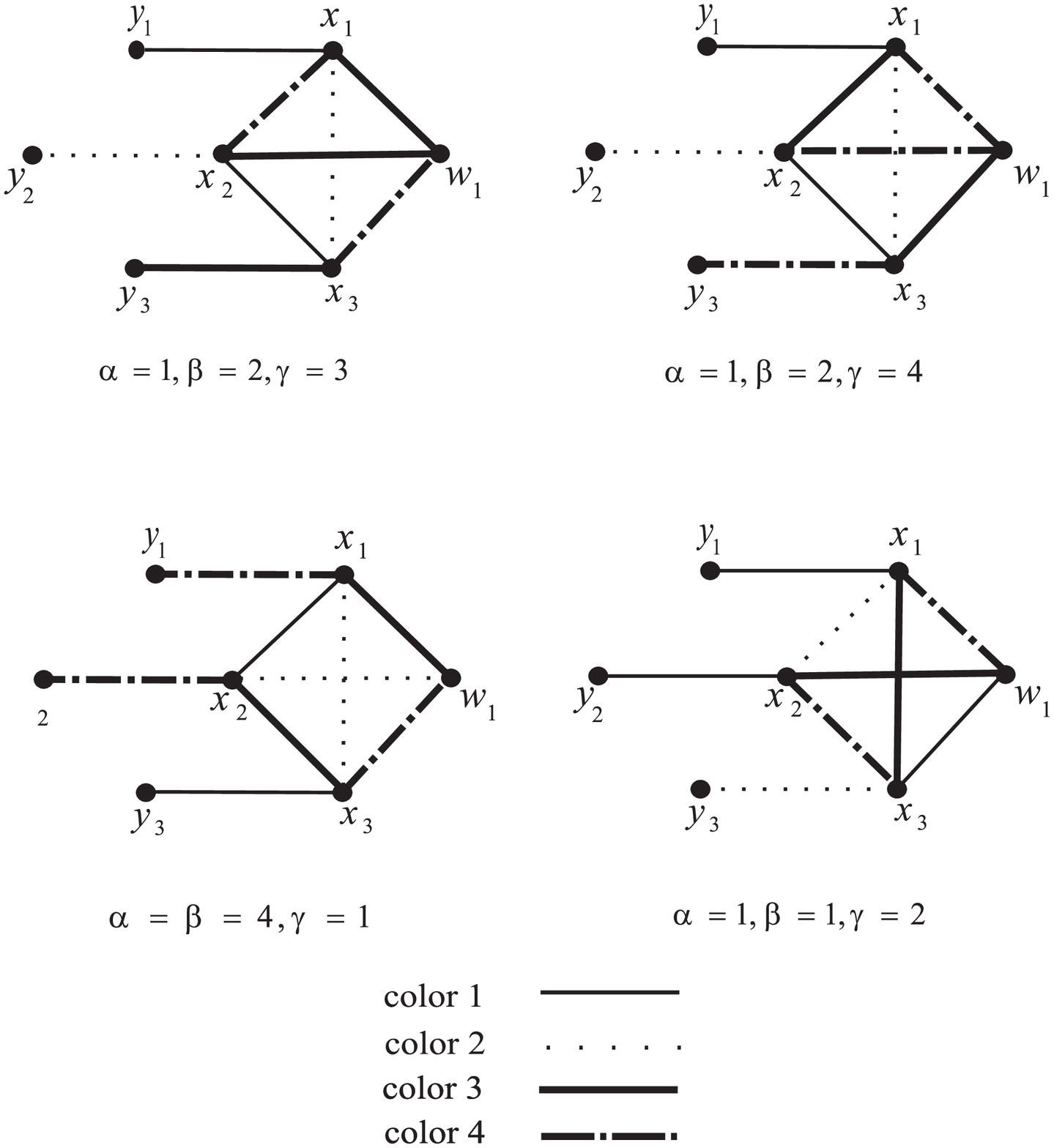} \vspace{12cm} $$\textmd{Figure\,\,$7$: A $4$-edge coloring of $\langle w_1,x_1,x_2,x_3\rangle \cup \{x_1y_1,x_2y_2,x_3y_3\}$}$$

%
%
%
%
%
%

Note that if $\alpha=\beta=1$ and $\gamma=4$, then since $f''(y_1)=1$, there is a missed color in $y_1$ different from $1$. Now, by changing color $w_1 x_1$ by this missed color, similar to one of the coloring of graphs shown in Figure $7$, we are done.

Now, we can easily color $\langle v,u_1,u_2,u_3,w_1\rangle$ by colors $\{1,2,3,4\}$ similar to one of the graphs in Figure $5$. This implies that $G$ is $f$-Class $1$ and we are done.\\

$(ii)$ $\Delta_f(G)=5$.\\
 By (\ref{uk31}), $u_1u_4\not \in E(G)$. Thus $u_1w_1\in E(G)$ or $u_4 w_1\in E(G)$.
 With no loss of generality, assume that $u_1w_1\in E(G)$.
Since two graphs $\langle v,u_1,u_4,w_2\rangle$ and $\langle v,u_1,u_4,w_3\rangle$ are not claw and $d_{G}(u_{1})=5$, with no loss of generality, we can suppose that   $u_1w_2\in E(G)$ and $u_4 w_3\in E(G)$.
Moreover, since $\langle v,u_1,u_5,w_3\rangle$ and $\langle v,u_1,u_6,w_3\rangle$ are not claw and $N_G(u_1)=\{v,u_2,u_3,w_1,w_2\}$,  we have $u_5w_3,u_6 w_3\in E(G)$.
Now, we want to show that
 \begin{equation}\label{u2wiu3wi}
\text{$u_i w_j\in E(G)$, for $i=2,3$ and $j=1,2$.}
\end{equation}
  To the contrary and with no loss of generality assume that $u_2w_1 \not \in E(G)$.
   Then since $\langle v,u_2,u_i,w_1\rangle$ is not a claw, we have $u_iw_1\in E(G)$, for $i=4,5,6$. This implies that $d_G(w_1)\ge 5$ and since $\Delta_f(G)=5$, we conclude that  $f(w_1)=2$. Now, by (\ref{vtwok3}), $u_2 w_1 \in E(G)$, a contradiction. Similarly, other items of (\ref{u2wiu3wi}) hold.

    Now, we would like  to show that $G$ is $f$-Class $1$. Two cases may occur:

$\bullet$ $w_1 w_2 \not \in E(G)$.\\
Since $\langle v,u_4,w_1,w_2\rangle$ is not a claw, with no loss of generality, $u_4w_1\in E(G)$ and so $d_G(w_1)\geq 5$, which implies that $f(w_1)=2$ and by (\ref{vtwok3}),  $u_5w_1,u_6w_1\in E(G)$. Since $d_G(w_1)=9$, there exists a vertex $z\in N_G(w_1)\setminus \{v,u_1,\ldots,u_6,w_3\}$ and $\langle z,u_1,u_4,w_1\rangle$ is a claw, a contradiction and the proof of this case is complete.

$\bullet$ $w_1 w_2 \in E(G)$.\\
Clearly, $\langle v,u_1,u_2,u_3,w_1,w_2\rangle \simeq K_6$ and so
\begin{equation}\label{vtook6}
\text{for every vertex $v$ with $f(v)=2$, $v$  is contained in a $K_6$.}
\end{equation}

Note that  since $d_G(w_i)\ge 5$ and $w_i\not \in V(G_{\Delta_f})$, by  Claim $1$ we conclude that $f(w_i)=2$, for $i=1,2$.
Let $P$ be the induced subgraph on the union of vertices of all $K_6$ in $G$.
First note that three vertices of each $K_6$ have degree $5$ in $G$. This implies that every two $K_6$ have at most three vertices in common.
 Also, every two $K_6$ have not one vertex in common, because otherwise there exists a vertex of degree $10$ in $G$. On the other hand, every two $K_6$ have not three vertices in common, because otherwise there exists a vertex $v\in V(P)$ such that $d_P(v)=8$ and it is not hard to see that $v$ is a center of a claw in $G$, a contradiction. Thus, the vertex set of every two $K_6$ have empty intersection or they have exactly two vertices in common. Hence each connected component of $P$ is one of the following figures.
\vspace{2cm}
 \includegraphics{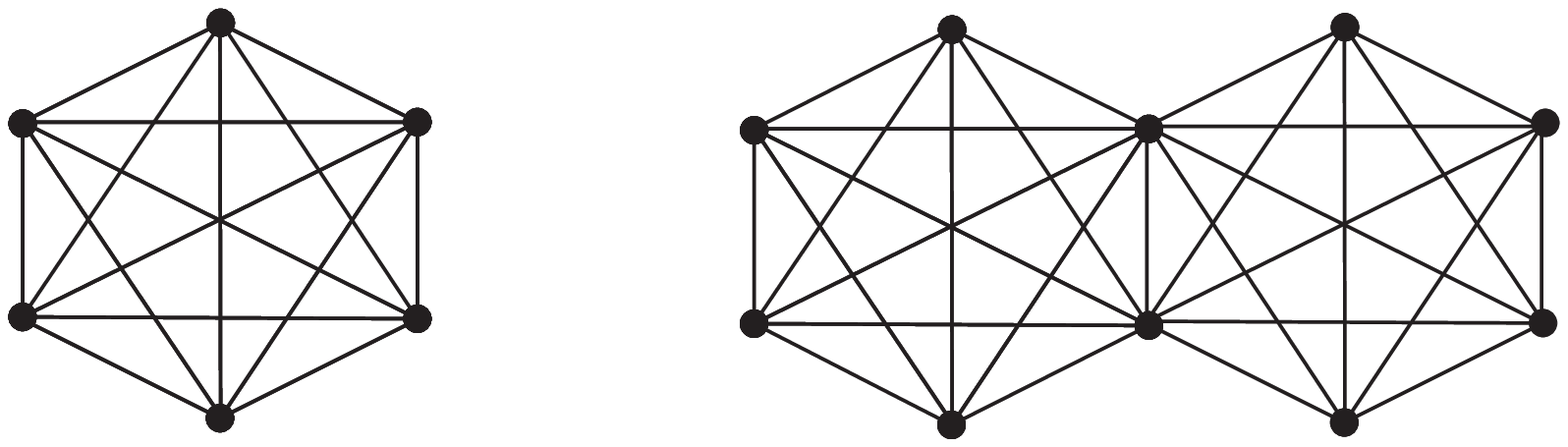} \vspace{2cm} $$\textmd{Figure\,\,$8$: Every component of the graph $P$}$$

Define $f':V(P)\longrightarrow \mathbb{N}$ as follows:
\begin{center}
$f'(z)=
\begin{cases}
   1       & if\,\,\,d_P(z)=5\\
      2      &if\,\,\,d_P(z)=9.\
\end{cases}
$
\end{center}
 It is not hard to see that $P$ has an  $f'$-coloring with colors $\{1,\ldots,5\}$.

Now, let $L=G \setminus E(P)$. We would like to prove the following claim.

\noindent{$\mathbf{Claim\,\, 3.}$} $\chi'(L)=5$.

If the claim is proved, then we color all edges of $L$ and $P$ by $5$ colors to obtain an $f$-coloring of $G$. Since for every vertex $v$ which are incident to some edges in $L$ and $P$, we have $f(v)=2$, we find an $f$-coloring of $G$ using $5$ colors.


 \noindent{$\mathbf{Proof\,\, of\,\, Claim\,\, 3.}$}
 Clearly,  the maximum degree of each connected component of $L$ is at most $5$. If the maximum degree is less than 5, then by Vizing's Theorem we are done.  Now, let $I$ be a connected component of $L$ such that $\Delta(I)=5$. Note that $V(I_{\Delta}) \subseteq V(G_{\Delta_f})$ and $\Delta(I_{\Delta}) \leq 2$. Note that since $G$ is connected, there exists a vertex $x\in V(I)\cap V(P)$ and so $d_I(x)\ge 1$. Since $\delta(P)=5$ and $d_G(x)= 9$, we conclude that $d_I(x)=4$. This implies that $f(x)=2$ and by (\ref{vtwok3}), it is not hard to see that  $\mid N_I(x)\cap V(G_{\Delta_f})\mid=3$ and so there exists a vertex $y\in N_I(x)$ such that $d_I(y)=4$. Let $N_I(x)\cap V(G_{\Delta_f})=\{u,u',u''\}$. Obviously,  since $G$ is claw-free, $yu,yu',yu''\in E(I)$.

 Let $J=I \setminus \{x,y,uu',uu'',u'u''\}$. We show that $J$ has a $5$-edge coloring.
 If $\Delta(J)\le 4$, then by Vizing's Theorem, $J$ has a $5$-edge coloring. Thus assume that $\Delta(J)=5$ and so  $\Delta(J_{\Delta})\le 2$ and $d_J(u)=d_J(u')=d_J(u'')=1$. Hence by Vizing's Theorem and Theorem \ref{item1}, every connected component of $J$ has a $5$-edge coloring. Let $N_J(u)=\{z\}, N_J(u')=\{z'\}$ and $N_J(u'')=\{z''\}$. We claim that  there exists a $5$-edge coloring of $J$ in which color of edges $uz,u'z'$ and $u''z''$ are distinct. To see this, if
$z=z'=z''$, then we are done.
 If $z\neq z'=z''$ and color of edges $uz,u'z'$ are the same and different from color of the edge $u''z''$, then since $d_J(z')=4$, we conclude that there exists a missed color in $z'$ which is different from the color of $u'z'$ and $u''z''$. Now, by substituting this missed color with the color of $u'z'$,  we  are done. Now, assume that $z, z'$ and $ z''$ are distinct.  Then, remove three vertices $u,u',u''$ of $J$. Also,   add a new vertex $s$, join $s$ to the vertices $z,z',z''$ and call the resultant graph by  $K$.
  Now, since $\Delta(K)=5$, $\Delta(K_{\Delta})\le 2$ and $\delta(K)=3$, by Theorem \ref{item1}, $K$ has a $5$-edge coloring. Now,  by a suitable extending this $5$-edge coloring to a $5$-edge coloring of $J$, we conclude that there exists a $5$-edge coloring of $J$ such that three distinct colors appear in edges $uz,u'z'$ and $u''z''$.

  Now, we want to extend the $5$-edge coloring of $J$ to a $5$-edge coloring of $I$ to complete the proof of Claim $3$. To see this, we show that there exists a $5$-edge coloring for $Q=\langle u,u',u'',x,y \rangle$
  such that three missed colors in $u,u'$ and $u''$ are distinct. Add a new vertex $q$ to $Q$ and join $q$ to $u,u',u''$ and call the resultant graph by $R$. Clearly, $R$ is the subgraph of $K_6$ and so $\chi'(R)=5$. Now,  Claim $3$ is proved.

$(iii)$  $\Delta_f(G)\ge 6$.\\
Consider $G\setminus \{v\}$, add two new vertices $v_1,v_2$ to $G\setminus \{v\}$, join $v_1,v_2$ to $\{u_1,w_1,\ldots,w_{\Delta_f(G)-1}\}$ and $\{u_2,\ldots,u_6,w_{\Delta_f(G)},\ldots,w_{2\Delta_f(G)-7}\}$, respectively.
Call the resultant graph $L$.
 Let $f':V(L)\longrightarrow \mathbb{N}$ be a function defined by
\begin{center}
$f'(v)=
\begin{cases}
   f(v)      &v \in V(G)\setminus \{v,v_1,v_2\},\\
      1       & v\in \{v_1,v_2\}.\
\end{cases}$
\end{center}
It is easy to see that  $L$ is connected, $\Delta_{f'}(L)=\Delta_f(G)$ and $V(L_{\Delta_{f'}})=V(G_{\Delta_f})\cup \{v_1\}$.
 Noting that $|N_{L_{\Delta_{f'}}}(v_1)|=1$. Now, by Theorem \ref{unicyclic}, $L$ has an $f'$-coloring with colors $\{1,\ldots,\Delta_{f'}(L)\}$, call $\theta$.

Now, define an $f$-coloring $c: E(G)\longrightarrow \{1, \ldots, \Delta_{f}(G)\} $ as follows. Let
\begin{center}
$\begin{cases}
c(e)= \theta(e) &{\rm for\,\,every}\,\,\,e\in E(G) \cap E(G')\\
c(u_1v)=\theta(u_1v_1)\\
c(u_iv)=\theta(u_iv_2)      &{\rm for}\,\,\,i=2,\ldots,6\\
  c(vw_i)=\theta(v_1w_i)      &{\rm for}\,\,\,i=1,\ldots,\Delta_{f}(G)-1\\
   c(vw_i)=\theta(v_2w_i)     & {\rm for}\,\,\,i=\Delta_{f}(G),\ldots,2\Delta_{f}(G)-7.
   \end{cases}$
\end{center}
Thus $G$ is $f$-Class $1$, a contradiction and the proof of the theorem is complete.
}
\end{proof}

%
%
%
%
%
%
%
%
%

\vspace{5mm} \centerline{\bf\large Acknowledgments}\vspace{5mm}

The authors are grateful to the referee
for useful comments.
 The first and third authors are indebted to the School of Mathematics,
Institute for Research in Fundamental Sciences (IPM) for support.
The research of the first author and third one were in part supported by a grant
from IPM (No. 92050212) and (No. 92050014), respectively.

\end{document}